\documentclass{amsart}

\usepackage{amsmath, amssymb, diagrams}
\usepackage{hyperref}


\usepackage[OT2,OT1]{fontenc} 
\DeclareSymbolFont{cyrletters}{OT2}{wncyr}{m}{n}
\DeclareMathSymbol{\Sha}{\mathalpha}{cyrletters}{"58}

\newcommand{\QQ}{\mathbb Q}
\newcommand{\ZZ}{\mathbb Z}

\DeclareMathOperator{\Sel}{Sel}
\DeclareMathOperator{\coker}{coker}

\DeclareMathOperator{\Gal}{Gal}
\DeclareMathOperator{\ord}{ord}
\DeclareMathOperator{\cd}{cd}
\DeclareMathOperator{\image}{Im}
\DeclareMathOperator{\Ind}{Ind}

\newcommand{\Fc}{F^{\rm cyc}}
\newcommand{\Fiw}{F_{\infty,w}}
\newcommand{\tEp}{\tilde{E}_{v,p^\infty}}
\newcommand{\Ep}{E_{p^\infty}}
\newcommand{\Sf}{\Sel(E/F_\infty)}
\newcommand{\Sc}{\Sel(E/\Fc)}
\newcommand{\Sp}{\Sel'(E/\Fc)}

\newcommand{\MHS}{{\mathfrak M}_H(\Sigma)}


\newtheorem{thm}{Theorem}[section]
\newtheorem{prop}[thm]{Proposition}
\newtheorem{lem}[thm]{Lemma}
\newtheorem{cor}[thm]{Corollary}

\theoremstyle{definition}
\newtheorem{definition}[thm]{Definition}

\newtheorem*{remark}{Remark}
\newtheorem*{convention}{Convention}

\begin{document}  

\title{Akashi series of Selmer groups}

\author{Sarah Livia Zerbes}
\address{Department of Mathematics, Harrison Building, University of Exeter, Exeter EX4 4QF, UK}
\email{s.zerbes@exeter.ac.uk}

\thanks{Supported by EPSRC Postdoctoral Fellowship EP/F043007/1.}

\begin{abstract}
 We study the Selmer group of an elliptic curve over an admissible $p$-adic Lie extension of a number field $F$. We give a formula for the Akashi series attached to this module, in terms of the corresponding objects for the cyclotomic $\ZZ_p$-extension and certain correction terms. This extends our earlier work \cite{zerbes09}, in particular since it applies to elliptic curves having split multiplicative reduction at some primes above $p$, in which case the Akashi series can have additional zeros. 
\end{abstract}

\maketitle

\section{Introduction}

 \subsection{Background}

  Let $p$ be a prime number, $F$ a number field, and $F_\infty / F$ an infinite Galois extension whose Galois group $\Sigma = \Gal(F_\infty / F)$ is a $p$-adic Lie group. We will always assume that $F_\infty / F$ is an {\it admissible} $p$-adic Lie extension, by which we mean that it satisfies the following conditions:
  \begin{enumerate}
   \item $F_\infty$ contains the cyclotomic $\ZZ_p$-extension $\Fc$ of $F$,
   \item $F_\infty$ is unramified outside a finite set of primes of $F$,
   \item $\Sigma$ has no element of order $p$. 
  \end{enumerate}
  
  This class of extensions has been much studied in the context of Iwasawa theory, following the seminal paper \cite{cfksv}. We recall some of the general setup. Let $H$ denote the normal subgroup $\Gal(F_\infty / \Fc)$ of $\Sigma$, and $\Gamma = \Gal(\Fc/F) = \Sigma / H$. 

  \begin{definition}
   We define $\MHS$ to be the full subcategory of the category of finitely-generated $\Lambda(\Sigma)$-modules consisting of objects $M$ for which $M / M(p)$ is finitely-generated over $\Lambda(H)$, where $M(p)$ is the $p$-torsion part of $M$.
  \end{definition}

  If $E$ is an elliptic curve over $F$, then the Selmer group $\Sf$ is a discrete $p$-primary abelian group with an action of $\Sigma$, and its Pontryagin dual $\mathcal{C}(E / F_\infty)$ is known to be finitely generated as a $\Lambda(\Sigma)$-module \cite[theorem 2.7]{coates99}. Moreover, it is conjectured that if $p \ge 5$ and $E$ has good ordinary or split multiplicative reduction at all primes $p$, we should have $\mathcal{C}(E / F_\infty) \in \MHS$.

  The natural map $\Lambda(\Sigma) \to \Lambda(\Gamma)$ induces a map $K_0(\Lambda(\Sigma)) \to K_0(\Lambda(\Gamma))$, which maps the class of a module $M$ to the alternating sum of its homology groups (which is finite as $H$ has finite $p$-cohomological dimension):
  \[[M] \mapsto \sum_{i \ge 0} (-1)^i [H_i(H, M)].\]
  For $M \in \MHS$, these homology groups are torsion $\Lambda(G)$-modules, so we obtain a map $K_0(\MHS) \to K_0(\Lambda(\Gamma))^{\rm tors}$; since the class of a torsion $\Lambda(\Gamma)$-module is determined by its characteristic power series, the latter is isomorphic to $Q(\Gamma)^\times / \Lambda(\Gamma)^\times$, where $Q(\Gamma)$ is the fraction field of $\Lambda(\Gamma)$. We define the {\it Akashi series} $f^\Sigma_M$ to be the image of $M$ under this map.

  In earlier work \cite{zerbes04,zerbes09}, motivated by applications to generalised Euler characteristics, we derived formulae for the $\Sigma$-Akashi series of $\mathcal{C}(E / F_\infty)$ in terms of the corresponding objects over $\Fc$ and various explicit correction terms. The purpose of this paper is to study these Akashi series in their own right, without imposing the fairly restrictive conditions (such as finiteness of the $p$-primary part of the Shafarevich-Tate group) required to ensure that the generalised Euler characteristic is defined. This allows us (for instance) to consider elliptic curves with split multiplicative reduction at some primes above $p$, in which case the Akashi series can have additional zeros at $T = 0$.

  If $E$ has good ordinary reduction at all primes above $p$, the main conjecture of noncommutative Iwasawa theory for $E / F_\infty$, formulated in \cite[\S 5]{cfksv}, predicts the existence of an element $\xi$ in $K_1(\Lambda(\Sigma)_{S^*})$ satisfying $\partial_\Sigma (\xi) = [\mathcal{C}(E/F_\infty)]$, and whose values at Artin characters of $\Sigma$ coincide (up to an appropriate correction factor) with the values at $s = 1$ of the $L$-functions of $E$ and its twists. Our results are consistent with this conjecture in the good ordinary case; in the split multiplicative case they would be consistent with an analogue of this conjecture taking into account the phenomenon of additional zeros.

 \subsection{Statement of the main results}

  In \cite{zerbes09}, we considered the Selmer group $\Sf$ for an elliptic curve $E/F$ over an admissible $p$-adic Lie 
  extension $F_\infty/F$, relating $\Sf$ to the cyclotomic Selmer group $\Sc$ and certain ``correction factors'' $J_v(\Fc)$ (whose definition we shall recall in the next section), for primes $v$ lying in the finite set $S'$ of primes of $F$ not dividing $p$ whose inertia group in $\Sigma$ is infinite. 

  In the course of the proof of \cite[Theorem 1.1]{zerbes09}, we showed that if $E$ satisfies the hypotheses
  \begin{itemize}
   \item $\Sha(E/F)(p)$ is finite,
   \item $E$ has good ordinary reduction at all primes of $F$ dividing $p$,
   \item $\mathcal{C} (E/F_\infty)\in\MHS$,
   \item (Fin$_{\rm glob}$) $H^i(H,\Ep(F_\infty))$ is finite for all $i\geq 0$,
   \item (Fin$_{\rm loc}$) $H^i(H_w,\tEp(k_{\infty,w}))$ is finite for all $i\geq 0$ and for every prime $v$ of $F$ dividing $p$, where $\tilde E_v$ denotes the reduced curve and $k_{\infty, w}$ is the residue field of a prime of $F_\infty$ above $v$,
  \end{itemize}
  then the $\Sigma$-Akashi series of $\Sf$ is related to the $\Gamma$-Akashi series of $\Sc$ (i.e.~its characteristic element) by the following equation:
  \begin{equation}\label{relationakashi3}
   f^\Sigma_{\Sf}=f^\Gamma_{\Sc}\times\prod_{v\in S'}f^\Gamma_{J_v(\Fc)} \pmod{\Lambda(\Gamma)^\times}.
  \end{equation}
  
  It seems difficult to verify the finiteness of $\Sha(E/F)(p)$ and the conditions (Fin$_{\rm glob}$) and (Fin$_{\rm loc}$) in general. In the present paper we relax the condition on $E$, and replace it with a slightly stronger assumption on the $p$-adic Lie extension $F_\infty/F$:

  \begin{definition}
   A $p$-adic Lie extension $F_\infty$ of $F$ is {\it strongly admissible} if it is admissible, and for each prime $v$ of $F$ dividing $p$, the local extension $\Fiw$ of $F_v$ (for $w$ a prime of $F_\infty$ above $v$) contains the unramified $\ZZ_p$-extension of $F_v$.
  \end{definition}

  If $F_\infty/F$ is strongly admissible in this sense then we can prove a relation between the Akashi series of $\Sf$ and $\Sc$, analogous to \eqref{relationakashi3}:
  
  \begin{thm}\label{maintheorem}
   Let $F$ be a finite extension of $\QQ$, $E$ an elliptic curve defined over $F$ and $p$ a prime $\geq 5$. Let $F_\infty$ be a strongly admissible $p$-adic Lie extension of $F$ with Galois group $\Sigma=\Gal(F_\infty/F)$. Assume that 
   \begin{enumerate}
    \item $E$ has either good ordinary or split multiplicative reduction at each prime of $F$ dividing $p$,
    \item $\mathcal{C} (E/F_\infty)\in\MHS$.
   \end{enumerate}
   Then 
   \[ f^\Sigma_{\Sf}=T^r \times f^\Gamma_{\Sc}\times\prod_{v \in S'}f^\Gamma_{J_v(\Fc)} \pmod{\Lambda(\Gamma)^\times},\] 
   where $r$ is the number of primes of $F$ dividing $p$ where $E$ has split multiplicative reduction,
   $S'$ is the set of primes of $F$ not dividing $p$ whose inertia group in $\Sigma$ is infinite and
   $f_v^\Gamma$ is the $\Gamma$-Akashi series of $J_v(\Fc)$. 
  \end{thm}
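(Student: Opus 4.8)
The plan is to run the same $K_0$-theoretic strategy as in the proof of \eqref{relationakashi3}, using that $f^\Sigma$ is a homomorphism on $K_0(\MHS)$ and hence multiplicative along short exact sequences, but to replace the finiteness inputs (Fin$_{\rm glob}$) and (Fin$_{\rm loc}$) by consequences of strong admissibility, and to recompute the local contributions at the primes above $p$ so as to accommodate split multiplicative reduction. Concretely, I would start from the defining exact sequence of the Selmer group over $F_\infty$,
\[ 0 \to \Sf \to H^1(G_S(F_\infty),\Ep) \xrightarrow{\lambda_\infty} \bigoplus_{v\in S}\mathcal{J}_v(F_\infty) \to \coker(\lambda_\infty)\to 0, \]
where $S$ is a finite set of primes containing those above $p$, the bad primes, and the primes ramified in $F_\infty$, together with its cyclotomic analogue; I take Pontryagin duals and push both through the Akashi map. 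Since $\mathcal{J}_v(F_\infty)=\bigoplus_{w\mid v}H^1(\Fiw,E)(p)$ is induced from the decomposition group, Shapiro's lemma identifies its $H$-homology with the $H_w$-homology of the single local term at $w$. After absorbing the cokernel into the global contribution via Poitou--Tate, the ratio $f^\Sigma_{\Sf}/f^\Gamma_{\Sc}$ then collapses to a single global factor times one local factor $\delta_v=f^\Gamma_{\mathcal{J}_v(\Fc)}/f^\Sigma_{\mathcal{J}_v(F_\infty)}$ for each $v\in S$.

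For the global factor and for the primes $v\in S'$ I would argue as in \cite{zerbes09}. The global factor is the alternating ratio of the duals of the groups $H^i(G_S(-),\Ep)$; it is a unit because the hypothesis $\mathcal C(E/F_\infty)\in\MHS$ together with $p\ge 5$ controls the relevant $H$-homology of the $H^0$- and $H^2$-terms, thereby replacing (Fin$_{\rm glob}$). For $v\in S'$ the inertia is infinite and the computation of $\delta_v$ reproduces exactly the factor $f^\Gamma_{J_v(\Fc)}$; as this involves only a prime not dividing $p$, it carries over from \eqref{relationakashi3} essentially unchanged.

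The substantive new work is the evaluation of $\delta_v$ at the primes $v\mid p$, which I would read off as an Iwasawa-theoretic local Euler characteristic. For $v$ of good ordinary reduction the \'etale quotient of $\Ep$ over $\Fiw$ is unramified with Frobenius acting by a unit root $\neq 1$, so $H^0(\Fc_w,\,\cdot\,)$ is finite and $\delta_v$ is a unit, exactly as in \eqref{relationakashi3}; here strong admissibility lets me conclude this \emph{without} (Fin$_{\rm loc}$), the unramified $\ZZ_p$-extension inside $\Fiw$ ensuring that the alternating sum of the $H_w$-homology of the local term is a unit. For $v$ of split multiplicative reduction I would use the Tate parametrisation to write the local filtration $0\to\mu_{p^\infty}\to\Ep\to\QQ_p/\ZZ_p\to 0$ over $F_v$, in which the \'etale quotient $\QQ_p/\ZZ_p$ is the \emph{trivial} module (Frobenius eigenvalue $1$). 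On the cyclotomic side the totally ramified group $\Gal(\Fc_w/F_v)$ acts trivially on this quotient, so it contributes a copy of $\QQ_p/\ZZ_p$ with trivial $\Gamma$-action to the local term, accounting for a factor $T$ in $f^\Gamma_{\mathcal{J}_v(\Fc)}$. On the $F_\infty$ side strong admissibility supplies the unramified Frobenius $\ZZ_p$ inside $H_w$, and the alternating sum of the $H_w$-homology of the trivial quotient is then a unit, the unramified direction forcing a cancellation between homological degrees; so this factor is absent from $f^\Sigma_{\mathcal{J}_v(F_\infty)}$. Hence $\delta_v=T$, and multiplying over the $r$ split multiplicative primes yields $T^r$.

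The main obstacle I anticipate is precisely this evaluation of $\delta_v$ in the split multiplicative case: one must show that the genuinely infinite local cohomology coming from the trivial \'etale quotient contributes \emph{exactly one} power of $T$ per prime --- neither a unit nor a higher power --- and that strong admissibility is the correct hypothesis to make the alternating sum of the $H_w$-homology groups a well-defined element of $Q(\Gamma)^\times/\Lambda(\Gamma)^\times$. This is where the ``additional zero at $T=0$'' genuinely enters and where the argument departs from \eqref{relationakashi3}; the delicate point is to control the degree-$0$ and degree-$1$ local homology simultaneously and to check that the Frobenius cancellation on the $F_\infty$ side, together with its failure on the cyclotomic side, combine to give the single factor $T$.
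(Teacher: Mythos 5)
Your skeleton is recognisably the paper's (compare the $F_\infty$-level and cyclotomic-level exact sequences through Hochschild--Serre and Shapiro, let strong admissibility stand in for (Fin$_{\rm glob}$) and (Fin$_{\rm loc}$), and extract one $T$ per split multiplicative prime), but the pivot of your argument --- the factorization of $f^\Sigma_{\Sf}/f^\Gamma_{\Sc}$ into per-prime terms $\delta_v=f^\Gamma_{J_v(\Fc)}/f^\Sigma_{J_v(F_\infty)}$ --- is ill-defined at exactly the primes $v\mid p$ where the new work happens. By the deeply ramified theory of \cite{coatesgreenberg96} one has $H^1(\Fc_x,E)(p)\cong H^1(\Fc_x,D_w)$, and the Pontryagin dual of this module has $\Lambda(\Gamma_x)$-rank $[F_v:\QQ_p]>0$ (in both the ordinary and the split multiplicative cases, by the local Euler characteristic formula); likewise $J_v(F_\infty)^H$ has positive corank. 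So for $v\mid p$ neither $f^\Gamma_{J_v(\Fc)}$ nor $f^\Sigma_{J_v(F_\infty)}$ exists in $Q(\Gamma)^\times/\Lambda(\Gamma)^\times$, and your assertion that the trivial Tate quotient ``accounts for a factor $T$ in $f^\Gamma_{J_v(\Fc)}$'' cannot be made literal: the $T$ is intrinsically a \emph{relative} term. The paper is engineered to avoid ever forming these characteristic elements: the fundamental diagram and the snake lemma produce only $\ker(\gamma_x)=H^1(H_w,D_w)$ and $\coker(\gamma_x)=H^2(H_w,D_w)$ (Lemma \ref{localkercoker3}), while the higher terms enter through the shift $H^i(H_w,H^1(\Fiw,E)(p))\cong H^{i+2}(H_w,D_w)$ (Lemma \ref{localisom3}) --- all cofinitely generated over $\ZZ_p$, hence cotorsion. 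Relatedly, ``absorbing the cokernel via Poitou--Tate'' conflates two different steps: (Tors$_\infty$) gives surjectivity of $\lambda_S(F_\infty)$ at the $F_\infty$ level (\cite[Proposition 2.5]{zerbes09}), but the cokernel of the invariant-level map $\psi_S(F_\infty)$ survives and has to be played off against $\coker(\gamma)$; this is Lemma \ref{comparison3}, which your sketch omits entirely.

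The evaluation you flag as the ``main obstacle'' is precisely where your proposal stops, and the paper closes it with a telescoping identity that you gesture at but never exploit. Strong admissibility implies, via Proposition \ref{akashi=1} (an extension of \cite[Lemma 4.5]{coatesschneidersujatha03}, resting on the product structure $\Gamma\times G$ inside $\Sigma_w$ supplied by the unramified $\ZZ_p$-extension), that $f^{\Sigma_w}_{D_w}=\prod_{i\geq 0}\bigl(f^{\Gamma_x}_{H^i(H_w,D_w)}\bigr)^{(-1)^i}=1$ for \emph{any} $\ZZ_p$-cofinitely generated module, independently of the $\Sigma_w$-action. Consequently the entire local correction at $v\mid p$ --- kernel and cokernel of $\gamma_x$ together with all higher terms --- collapses to the single factor $f^{\Gamma_x}_{H^0(H_w,D_w)}$ (Proposition \ref{endofproof}): this is $1$ in the good ordinary case, since $\tEp(k^{\rm cyc}_x)$ is finite, and is exactly $T$ in the split multiplicative case, since $H^0(H_w,\QQ_p/\ZZ_p)=\QQ_p/\ZZ_p$ with trivial $\Gamma_x$-action has dual $\ZZ_p$, with characteristic element $T$. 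That settles ``exactly one power of $T$, neither a unit nor a higher power'' with no analysis of Frobenius eigenvalues on the filtration $0\to\mu_{p^\infty}\to\Ep\to\QQ_p/\ZZ_p\to 0$ (the paper never uses this filtration; $D_w=\QQ_p/\ZZ_p$ enters only through the Coates--Greenberg isomorphism), and the same mechanism ($f^\Sigma_{\Ep(F_\infty)}=1$, Proposition \ref{globalakashi3}, plus Imai's theorem for the $H^0$-term) disposes of your global factor. In short: correct strategy and correct identification of where strong admissibility enters, but the per-prime factorization fails as written at $v\mid p$, and the decisive split-multiplicative computation is left as an acknowledged difficulty rather than proved.
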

  
  \begin{remark} It follows from \cite[lemma 2.14]{zerbes09} that $f^\Gamma_{J_v(\Fc)}$ has nonzero constant term equal to the $L$-factor $L_v(E,1) \pmod{\ZZ_p^\times}$, so we have
  \[
   \ord_{T=0}(f_{\Sf}) = \ord_{T=0}(f_{\Sc})+r,
  \]
  i.e.~the Akashi series has extra zeroes, and its leading term $\alpha^\Sigma_{\Sf}$ is given by 
  \[ \alpha^\Sigma_{\Sf} = \alpha_{\Sc}^\Gamma\times\prod_{v \in S'}L_v(E,1)\pmod{\ZZ_p^\times}.\]
  In particular, {\it if} the modules $\Sf$ and $\Sc$ posess generalised Euler characteristics in the sense of \cite[definition 2.7]{zerbes09}, which is equivalent to being ``semisimple at $T = 0$'' in the sense of \cite[definition 3.11]{burnsvenjakob06}, then the generalised Euler characteristics of $\Sf$ and $\Sc$ are related by the same formula as derived in \cite{zerbes09}.
  \end{remark}
  
  The proof of Theorem \ref{maintheorem} relies on certain symmetry properties of the Hochschild-Serre spectral sequence, which imply that for a strongly admissible $p$-adic Lie extension the Akashi series of every finitely generated $\ZZ_p$-module is equal to $1$ (c.f. Proposition \ref{akashi=1}). This observation should be considered as a substitute for the conditions (Fin$_{glob}$) and (Fin$_{loc}$) appearing in our earlier work. Since Proposition \ref{akashi=1} is independent of the action of $\Sigma$, it also allows us to deal with the case when $E$ has split multiplicative reduction at some primes of $F$ dividing $p$. 

  Finally, in section \ref{gl2ext} we consider an important special case, when $F_\infty$ is the field $F(\Ep)$ of $p$-power torsion points of $E$. We show that this extension is strongly admissible if and only if $E$ has good reduction at all primes above $p$, and when this is not the case, we explicitly calculate the necessary correction terms to give a formula for the Akashi series of $\Sf$.
  

 \section{Akashi series}
 
  In this section, we prove some results about the Akashi series of elements in $\MHS$. Let us begin by recalling the definition of the Akashi series and some of its properties.
  
  \begin{definition}
   Let $M\in\MHS$. Then the homology group $H_i(H,M)$ is a finitely generated torsion $\Lambda(\Gamma)$-module for all $i\geq 0$. Let $f^\Gamma_{H_i(H,M)}$ be the characteristic element of the $\Lambda(\Gamma)$-module $H_i(H,M)$, and define the Akashi series of $M$ to be
   \[
    f^\Sigma_M=\prod_{i\geq 0}\big(f^\Gamma_{H_i(H,M)}\big)^{(-1)^i}.
   \]
  \end{definition}

  \begin{convention}
   Let $X$ be a discrete $p$-primary $\Sigma$-module such that $M=X^\vee\in\MHS$. Define the Akashi series $f_X$ of $X$ to be the Akashi series of $M$.
  \end{convention}
 
  \begin{prop}\label{multiplicativity3}
   If we have an exact sequence of modules in $\MHS$
   \[
    0\rTo L\rTo M\rTo N\rTo 0,
   \]
   then $f^\Sigma_M=f^\Sigma_Nf^\Sigma_L$.
  \end{prop}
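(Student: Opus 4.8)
The plan is to deduce the multiplicativity from the long exact sequence in $H$-homology attached to the given short exact sequence, combined with the additivity of characteristic elements of torsion $\Lambda(\Gamma)$-modules. First I would apply the functors $H_i(H,-)$ to the sequence $0\to L\to M\to N\to 0$, obtaining the long exact sequence of $\Lambda(\Gamma)$-modules
\[
 \cdots\to H_i(H,L)\to H_i(H,M)\to H_i(H,N)\to H_{i-1}(H,L)\to\cdots\to H_0(H,N)\to 0.
\]
Since $\Sigma$ has no element of order $p$, the closed subgroup $H$ is a compact $p$-adic Lie group of finite $p$-cohomological dimension $d=\cd_p(H)$; hence $H_i(H,-)=0$ for $i>d$, so the map $H_d(H,L)\to H_d(H,M)$ is injective and the displayed sequence is a \emph{finite} exact sequence. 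Because $L,M,N\in\MHS$, each term $H_i(H,L)$, $H_i(H,M)$, $H_i(H,N)$ is a finitely generated torsion $\Lambda(\Gamma)$-module, so its characteristic element is defined and the three Akashi series $f^\Sigma_L$, $f^\Sigma_M$, $f^\Sigma_N$ make sense.

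The key input is then that the characteristic element is additive on short exact sequences: for any short exact sequence $0\to A\to B\to C\to 0$ of finitely generated torsion $\Lambda(\Gamma)$-modules one has $f^\Gamma_B=f^\Gamma_A\,f^\Gamma_C$ in $Q(\Gamma)^\times/\Lambda(\Gamma)^\times$. This is immediate from the multiplicativity of characteristic ideals over the regular ring $\Lambda(\Gamma)\cong\ZZ_p[[T]]$. Splitting the finite long exact sequence above into short exact sequences through the images of the successive maps and applying this additivity repeatedly shows that the alternating product of the characteristic elements of \emph{all} terms is trivial. Ordering the terms of the exact sequence and tracking the parity of their positions, the contribution of each $H_i(H,N)$ enters with sign $(-1)^i$, each $H_i(H,M)$ with sign $(-1)^{i+1}$, and each $H_i(H,L)$ with sign $(-1)^i$, so the resulting identity reads
\[
 \prod_{i\ge 0}\big(f^\Gamma_{H_i(H,N)}\big)^{(-1)^i}\cdot\big(f^\Gamma_{H_i(H,M)}\big)^{(-1)^{i+1}}\cdot\big(f^\Gamma_{H_i(H,L)}\big)^{(-1)^i}=1,
\]
that is, $f^\Sigma_N\,(f^\Sigma_M)^{-1}\,f^\Sigma_L=1$. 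Rearranging gives exactly $f^\Sigma_M=f^\Sigma_N f^\Sigma_L$, as claimed.

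There is no deep obstacle here; the proof is essentially bookkeeping once the two standard inputs are in place. The point most worth checking carefully is that every term of the homology sequence is genuinely $\Lambda(\Gamma)$-torsion, so that the characteristic elements (and hence the Akashi series) are well defined throughout — this is precisely guaranteed by the hypothesis that $L,M,N$ all lie in $\MHS$, rather than merely $M$ and one of the ends. The only other subtlety is the sign bookkeeping in the alternating product, which I would either carry out explicitly as above or phrase abstractly as the statement that the alternating product of characteristic elements over any bounded exact sequence of torsion $\Lambda(\Gamma)$-modules equals $1$.
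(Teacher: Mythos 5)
Your proof is correct and is exactly the argument the paper leaves implicit: the paper's proof is simply ``Clear from the definition,'' meaning precisely the long exact sequence in $H$-homology combined with multiplicativity of characteristic elements over bounded exact sequences of torsion $\Lambda(\Gamma)$-modules, with the sign bookkeeping you carry out. Your verification of the signs (position $3i$, $3i+1$, $3i+2$ for the $N$-, $M$-, $L$-terms) and your remark that all three modules must lie in $\MHS$ for every term to be torsion are both accurate.
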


  \begin{proof}
   Clear from the definition.
  \end{proof}
  
  The next result is an extension of Lemma 4.5 in \cite{coatesschneidersujatha03}:
  let $F$ be a finite extension of $\QQ$ and $p$ a prime $\geq 5$. Let $F_\infty$ be an
  admissible $p$-adic Lie extension of $F$ with Galois group $\Sigma=\Gal(F_\infty/F)$.
  
  \begin{prop}\label{akashi=1}
   Assume that there exists a subextension $K_\infty$ of $F_\infty$ containing $\Fc$ such that the Galois
   group $\Sigma' = \Gal(K_\infty/F)$ can be written as a direct product $\Gamma \times G$, where $\Gamma \cong\Gal(\Fc/F)$ and $G$
   has dimension $\geq 1$. Let $M$ be a $\Sigma$-module which is cofinitely generated as a $\ZZ_p$-module.
   Then $f^\Sigma_M\thicksim 1$.
  \end{prop}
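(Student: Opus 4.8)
The plan is to exploit the hypothesized direct-product structure $\Sigma' = \Gamma \times G$ to reduce the computation of the Akashi series to the subextension $K_\infty$, and then to use a symmetry in the homology groups $H_i(H', M)$ that forces the alternating product to be trivial. First I would argue that it suffices to compute the Akashi series relative to $\Sigma'$ rather than $\Sigma$: since $M$ is cofinitely generated as a $\ZZ_p$-module, its dual is finitely generated over $\ZZ_p$, hence finitely generated over $\Lambda(H'')$ for $H'' = \Gal(F_\infty/K_\infty)$, so the homology groups $H_i(H'', M^\vee)$ are finitely generated $\ZZ_p$-modules and the Akashi series computed in stages via the Hochschild--Serre spectral sequence for $H'' \lhd H$ factors through the $K_\infty$ computation. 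In fact, because $M$ is already $\ZZ_p$-cofinitely generated, the higher homology over $H''$ contributes only $\ZZ_p$-modules, whose $\Lambda(\Gamma)$-characteristic elements I expect to be units, so the whole problem descends to computing $f^{\Sigma'}_M$ over the group $\Sigma' = \Gamma \times G$ with $H' = \Gal(K_\infty/\Fc) = G$.

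Next I would compute $f^{\Sigma'}_M = \prod_i \big(f^\Gamma_{H_i(G, M)}\big)^{(-1)^i}$ directly. The key point is that $M$ is finitely generated over $\ZZ_p$ with an action of $\Gamma \times G$, and since the product is direct, $\Gamma$ acts on each homology group $H_i(G, M)$ commuting with everything, so each $H_i(G, M)$ is a finitely generated $\ZZ_p$-module carrying a $\Gamma$-action. The crucial structural input is Poincar\'e duality (or a suitable symmetry of group homology for the compact $p$-adic Lie group $G$ of dimension $d \geq 1$, which is a Poincar\'e duality group of dimension $d$): this gives an isomorphism $H_i(G, M) \cong H^{d-i}(G, M)^*$ twisted by the dualizing module, and combined with the relationship between homology and cohomology of $M$ as a $\ZZ_p$-module, it pairs up the terms in the alternating product so that their characteristic elements cancel. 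I would set up this duality carefully, tracking the dualizing character of $G$, to show that the contributions for degree $i$ and degree $d-i$ have inverse (or equal, depending on parity) characteristic elements modulo $\Lambda(\Gamma)^\times$.

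The decisive simplification, and the reason the answer is exactly $1$, is that every $H_i(G, M)$ is a finitely generated $\ZZ_p$-module, hence (as a $\Lambda(\Gamma)$-module) its characteristic element $f^\Gamma_{H_i(G,M)}$ is a power of $T$ up to a unit, namely $T^{\ZZ_p\text{-corank-type invariant}}$; I would then show that the alternating sum of these $\ZZ_p$-ranks vanishes. This last vanishing is precisely the statement that the $G$-Euler characteristic of $M$, viewed over $\ZZ_p$, is trivial in $\Lambda(\Gamma)^{\times}$-terms, and it follows from the fact that $G$ has positive dimension together with the symmetry just described: the dimension shift in Poincar\'e duality sends the $T$-adic valuation in degree $i$ to its negative in degree $d-i$, so the alternating product telescopes to $T^0$ up to a unit.

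The main obstacle I anticipate is the reduction step in the first paragraph, namely justifying that replacing $\Sigma$ by $\Sigma'$ (equivalently, factoring the Akashi series through the intermediate extension $K_\infty$) does not introduce extra characteristic-element contributions. This requires a clean statement that, for $M$ cofinitely generated over $\ZZ_p$, the Akashi series is computed correctly from the quotient group $\Sigma'$ acting on the appropriate homology, and that the $\Lambda(\Gamma)$-characteristic elements arising from the $H''$-homology are units because those homology groups are themselves $\ZZ_p$-finitely generated. I would handle this via the Hochschild--Serre spectral sequence $H_p(G, H_q(H'', M)) \Rightarrow H_{p+q}(H, M)$ and multiplicativity of the characteristic element (Proposition~\ref{multiplicativity3}), checking that every $\Lambda(\Gamma)$-module appearing is torsion with the expected characteristic element, so that the final alternating product collapses to the product computed over $\Sigma'$, which we have shown equals $1$.
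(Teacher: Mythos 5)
Your reduction to $K_\infty$ is essentially the paper's own first step: setting $R = \Gal(F_\infty/K_\infty)$, the Hochschild--Serre spectral sequence gives $f^\Sigma_M = \prod_{i \ge 0} \bigl(f^{\Sigma'}_{H^i(R,M)}\bigr)^{(-1)^i}$, and each $H^i(R,M)$ is again $\ZZ_p$-cofinitely generated. But note that the correct conclusion is that it suffices to prove the direct-product case for \emph{every} $\ZZ_p$-cofinitely generated $\Sigma'$-module, applied to each $H^i(R,M)$ separately; your claim that the extra terms have \emph{unit} characteristic elements because they are $\ZZ_p$-(co)finitely generated is false, for the reason explained next.

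The genuine gap is in your argument for the direct-product case, and it is fatal as stated. First, the $\Lambda(\Gamma)$-characteristic element of a finitely generated $\ZZ_p$-module is \emph{not} in general a power of $T$ up to a unit: the module $\ZZ_p(\chi)$, with a topological generator $\gamma$ acting by $\chi(\gamma) \neq 1$, is $\ZZ_p[[T]]/\bigl((1+T)-\chi(\gamma)\bigr)$, with characteristic element the degree-one distinguished polynomial $T+1-\chi(\gamma)$ --- exactly the factors appearing in Theorem \ref{theorem11} of this paper. So the (true) vanishing of the alternating sum of $\ZZ_p$-ranks does not force the alternating product of characteristic elements to be a unit. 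Second, and decisively, your mechanism never genuinely uses the direct-product hypothesis: $\Gamma$ acts on $H_i(G,M)$ for \emph{any} extension with $G$ normal, and Poincar\'e duality (available by Lazard for any compact $p$-adic Lie group without $p$-torsion) holds just as well for semidirect products. If your argument were correct it would therefore prove the same conclusion for $\Sigma' = \ZZ_p \rtimes \Gamma$, which the paper's own computation refutes: in the split multiplicative case of Theorem \ref{theorem11}, with $G = H_w \cong \ZZ_p$ and $M = \QQ_p/\ZZ_p$ (trivial action), one has $H^0(H_w, M) = \QQ_p/\ZZ_p$ and $H^1(H_w, M) = (\QQ_p/\ZZ_p)(\chi)$, giving Akashi series $T \cdot (T+1-\chi(\gamma))^{-1} \not\thicksim 1$, even though $G$ has dimension one, is a Poincar\'e duality group, and the rank sum vanishes. (Duality also does not ``negate valuations'': it relates $f(T)$ to its image under the involution $1+T \mapsto (1+T)^{-1}$ together with a dualizing twist, and for $\dim G$ even it pairs degrees of the same parity, so no sign cancellation is available.) The correct mechanism --- this is Lemma 4.5 of \cite{coatesschneidersujatha03}, which the paper simply cites after the reduction step --- is that directness forces $\gamma$ to act \emph{trivially} on the Lie algebra $\mathfrak{g}$ of $G$: setting $V = M^\vee \otimes \QQ_p$ and using Lazard's isomorphism (finite modules being pseudo-null, hence harmless), one gets in the Grothendieck group of $\gamma$-representations
\[
 \sum_{i \ge 0} (-1)^i \,[H_i(\mathfrak{g}, V)] \;=\; \sum_{i=0}^{d} (-1)^i \,[\Lambda^i \mathfrak{g} \otimes V] \;=\; \Bigl(\sum_{i=0}^{d} (-1)^i \binom{d}{i}\Bigr)[V] \;=\; 0
\]
since $d = \dim G \ge 1$, and it is this cancellation of \emph{classes} (not merely of ranks) that makes the alternating product of characteristic elements a unit. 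Your proposal is missing precisely this use of the direct-product structure.
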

  \begin{proof}
   Let $R=\Gal(F_\infty/K_\infty)$. By the Hochschild-Serre spectral sequence, we have
   \[ f^\Sigma_{M} = \prod_{i \ge 0} \left(f^{\Sigma'}_{H^i(R, M)}\right)^{(-1)^i},\]
   and the $\Lambda(\Sigma')$-modules $H^i(R, M)$ are also $\ZZ_p$-cofinitely generated. Hence we may assume without loss of generality that $F_\infty = K_\infty$, which is Lemma 4.5 in \cite{coatesschneidersujatha03}.
  \end{proof}

  \begin{lem}\label{chareltinduced}
   If $\Gamma' \subseteq \Gamma$ is a finite-index subgroup, and $M$ is a torsion $\Lambda(\Gamma')$-module, then the characteristic element of the $\Lambda(\Gamma)$-module $N = \Ind_{\Gamma'}^\Gamma M = \Lambda(\Gamma) \otimes_{\Lambda(\Gamma')} M$ is given by
   \[ f^\Gamma_{N} = f^{\Gamma'}_{M} \pmod{\Lambda(\Gamma)^\times}.\]
  \end{lem}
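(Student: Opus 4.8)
The plan is to exploit the fact that $\Gamma = \Gal(\Fc/F) \cong \ZZ_p$, so that both $\Lambda(\Gamma)$ and $\Lambda(\Gamma')$ are isomorphic to $\ZZ_p[[T]]$, and $\Lambda(\Gamma)$ is free of rank $d = [\Gamma:\Gamma']$ as a $\Lambda(\Gamma')$-module. In particular $\Lambda(\Gamma)$ is flat over $\Lambda(\Gamma')$, so the induction functor $\Ind_{\Gamma'}^\Gamma = \Lambda(\Gamma) \otimes_{\Lambda(\Gamma')} (-)$ is exact. Since the characteristic element is multiplicative in short exact sequences and trivial on pseudo-null modules, the strategy is to reduce the statement to the case of an elementary cyclic module, where the computation is immediate.

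First I would treat the base case $M = \Lambda(\Gamma')/(g)$ for a nonzero $g \in \Lambda(\Gamma')$, so that $f^{\Gamma'}_M = g$. Right-exactness of the tensor product gives
\[ \Ind_{\Gamma'}^\Gamma \big(\Lambda(\Gamma')/(g)\big) = \Lambda(\Gamma)/g\Lambda(\Gamma), \]
where $g$ is now regarded as an element of $\Lambda(\Gamma')\subseteq\Lambda(\Gamma)$. As $\Lambda(\Gamma)$ is an integral domain and $g \neq 0$, this is a cyclic torsion $\Lambda(\Gamma)$-module with characteristic element equal to the image of $g$. Hence $f^\Gamma_N = g = f^{\Gamma'}_M$ in $Q(\Gamma)^\times/\Lambda(\Gamma)^\times$, which is the desired identity in this case.

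To pass to a general finitely generated torsion $\Lambda(\Gamma')$-module $M$, I would invoke the structure theorem: there is a pseudo-isomorphism $M \to E := \bigoplus_i \Lambda(\Gamma')/(g_i)$ with $f^{\Gamma'}_M = \prod_i g_i$. Over $\ZZ_p[[T]]$ a pseudo-null module is precisely a finite module, so the kernel and cokernel of this map are finite. Applying the exact functor $\Ind_{\Gamma'}^\Gamma$ and splitting into short exact sequences, one finds that $\Ind_{\Gamma'}^\Gamma M$ and $\Ind_{\Gamma'}^\Gamma E$ differ only by the induced modules of these finite groups. Since $\Lambda(\Gamma)$ is $\Lambda(\Gamma')$-free of rank $d$, the induction of a finite module is again finite, hence pseudo-null over $\Lambda(\Gamma)$; by multiplicativity and triviality of the characteristic element on pseudo-null modules, $\Ind_{\Gamma'}^\Gamma M$ and $\Ind_{\Gamma'}^\Gamma E$ therefore have the same characteristic element. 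Combining this with the base case applied to each summand $\Lambda(\Gamma')/(g_i)$ yields $f^\Gamma_N = \prod_i g_i = f^{\Gamma'}_M$.

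The only point requiring care — and the step I expect to be the main obstacle — is verifying that $\Ind_{\Gamma'}^\Gamma$ carries pseudo-isomorphisms to pseudo-isomorphisms, i.e. that it preserves pseudo-nullity of kernels and cokernels. This is where the freeness (flatness) of $\Lambda(\Gamma)$ over $\Lambda(\Gamma')$ is used twice: once to keep the relevant sequences exact, and once to guarantee that induction of a finite $\Lambda(\Gamma')$-module remains finite over $\Lambda(\Gamma)$. Everything else is a formal consequence of the multiplicativity of characteristic elements.
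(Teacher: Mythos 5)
Your proof is correct and takes essentially the same route as the paper, whose entire proof reads ``Immediate from the structure theorem, since $\Lambda(\Gamma)$ is free as a $\Lambda(\Gamma')$-module and hence flat''; you have simply written out the details of that reduction (the cyclic case, exactness of induction, and preservation of pseudo-nullity via finiteness), all of which check out.
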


  \begin{proof} Immediate from the structure theorem, since $\Lambda(\Gamma)$ is free as a $\Lambda(\Gamma')$-module and hence flat.\end{proof}

 \section{The Selmer groups}
 
   Let $F$ be a finite extension of $\QQ$, $E$ an elliptic curve defined over $F$ and $p$ a prime $\geq 5$. Let $F_\infty$ be an admissible $p$-adic Lie extension of $F$ with Galois group $\Sigma$. We choose a set $S$ of primes of $F$ containing the primes dividing $p$, the primes where $E$ has bad reduction and the primes which ramify in $F_\infty/F$. 

   \begin{definition}
    For a finite extension $L$ of $F$ unramified outside $S$, and $v \in S$, we write
    \[ J_v(L) = \bigoplus_{q \mid v} H^1(L_q, E)(p).\]
    For an infinite extension (again unramifed outside $S$) we write 
    \[J_v(L) = \varinjlim_{L' \subset L} J_v(L'),\]
    where the direct limit is taken over finite extensions of $F$ contained in $L$.
   \end{definition}

   Recall that the Selmer group $\Sf$ is defined by the exact sequence
   \[
    0\rTo\Sf\rTo H^1(G_S(F_\infty),\Ep)\rTo^{\lambda_S(F_\infty)} \bigoplus_{v\in S}J_v(F_\infty).
   \]
   We define the large Selmer group $\Sp$ by the exact sequence
   \[
    0\rTo\Sp\rTo H^1(G_S(\Fc),\Ep)\rTo \bigoplus_{v\in S - S'}J_v(\Fc),
   \]
   where (as in the introduction) $S' \subset S$ denotes the set of primes of $F$ not dividing $p$ whose inertia group in $\Sigma$ is infinite. If $\Sc$ is $\Lambda(\Gamma)$-torsion, then as shown in \cite{coatesschneidersujatha03} the classical Selmer group $\Sc$ and $\Sp$ are related by the short exact sequence
   \begin{equation}\label{shortexactlarge2}
    0\rTo\Sc\rTo\Sp\rTo\bigoplus_{v\in S'}J_v(\Fc)\rTo 0.
   \end{equation}
  
\section{Local results}

 Let $H=\Gal(F_\infty/\Fc)$ and $\Gamma=\Gal(\Fc/F)$, as above. If $v$ is a prime of $F$ and $x$ is a prime of $\Fc$ above $v$, we will write $\Gamma_x$ for the decomposition group of $x$, and for $w$ a prime of $F_\infty$ above $x$, we write $H_w$ for the decomposition group of $w$ in $H$. Note that there will be finitely many primes of $\Fc$ above each prime of $F$, so $\Gamma_x$ will have finite index in $\Gamma$, but $H_w$ will not have finite index in $H$ in general. We identify $\Gamma_x$ and $H_w$ with the Galois groups $\Gal(\Fc_x / F_v)$ and $\Gal(\Fiw / \Fc_x)$ in the usual fashion. We let $\Sigma_w = \Gal(\Fiw/F_v)$, so $\Sigma_w / H_w \cong \Gamma_x$.

 For a prime $v$ of $F$ above $p$, we define the following condition:
 \begin{itemize}
  \item (R$_v$) $E$ has either good ordinary or split multiplicative reduction at $v$ 
 \end{itemize}
 
 \subsection{Local Cohomology}
 
 Let $v$ be a prime of $F$ in $S$, and choose compatible primes $x$ and $w$ of $\Fc$ and $F_\infty$ above $v$. Recall that we defined $S'$ to be the subset of $S$ of primes not dividing $p$ and having infinite inertia group in $\Sigma$.

  \begin{lem}\label{locdim1}
   If $v$ does not divide $p$, then $\Sigma_w$ has dimension $2$ if and only $v \in S'$. Otherwise, $\Sigma_w$ has dimension $1$.
  \end{lem}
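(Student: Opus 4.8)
The plan is to analyse the local Galois group $\Sigma_w=\Gal(\Fiw/F_v)$ through its inertia subgroup. Writing $I_w\subseteq\Sigma_w$ for the inertia subgroup, I would begin from the short exact sequence
\[ 1\to I_w\to\Sigma_w\to\Delta_w\to 1, \]
where $\Delta_w=\Sigma_w/I_w$ is the Galois group of the residue field extension. Both $I_w$ and $\Delta_w$ are closed subquotients of the $p$-adic Lie group $\Sigma_w$, hence are themselves $p$-adic Lie groups, and $\dim\Sigma_w=\dim I_w+\dim\Delta_w$. Since the dimension of a $p$-adic Lie group vanishes exactly when the group is finite, the whole lemma reduces to two claims: (a) $\dim\Delta_w=1$ for every $v\nmid p$, and (b) $\dim I_w\le 1$, with $I_w$ infinite (equivalently $\dim I_w=1$) precisely when $v\in S'$.

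For (a) I would use that, as $v\nmid p$, the local cyclotomic extension $\Fc_x/F_v$ is the unramified $\ZZ_p$-extension of $F_v$; thus $\Gamma_x=\Gal(\Fc_x/F_v)$ is infinite of dimension $1$, and $I_w$ maps trivially to $\Gamma_x$ under the surjection $\Sigma_w\twoheadrightarrow\Gamma_x$. Hence $\Delta_w$ surjects onto $\Gamma_x$, giving $\dim\Delta_w\ge 1$. On the other hand $\Delta_w$ is the Galois group of an extension of finite residue fields, hence procyclic, so as a $p$-adic Lie group it has dimension at most $1$; therefore $\dim\Delta_w=1$ and in particular $\dim\Sigma_w\ge 1$ always.

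For (b) I would invoke the structure of local inertia at a prime of residue characteristic $\ell\neq p$: inertia of the absolute Galois group of $F_v$ sits in an exact sequence $1\to P\to I\to I^{t}\to 1$, where the wild inertia $P$ is pro-$\ell$ and the tame quotient $I^{t}\cong\prod_{q\neq\ell}\ZZ_q$ is procyclic of order prime to $\ell$. Since $\Sigma$ is a compact $p$-adic Lie group it is virtually pro-$p$, so every pro-$\ell$ subgroup and, more generally, every prime-to-$p$ subgroup of $\Sigma_w$ is finite. Consequently the image of $P$ in $I_w$ is finite, and modulo this finite part $I_w$ is a quotient of $I^t$ whose prime-to-$p$ contribution is again finite; only the pro-$p$ part $\ZZ_p$ of $I^t$ survives, so $\dim I_w\le 1$.

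Combining (a) and (b) yields $\dim\Sigma_w=1+\dim I_w\in\{1,2\}$, with $\dim\Sigma_w=2$ if and only if $\dim I_w=1$, i.e.\ if and only if $I_w$ is infinite. Finally the inertia group of $v$ in $\Sigma$ is conjugate to $I_w$, and infiniteness is a conjugation-invariant condition, so by the definition of $S'$ the dimension-$2$ case is exactly the case $v\in S'$, which completes the argument. The hard part will be step (b): one must genuinely use that wild inertia is pro-$\ell$ and tame inertia procyclic of prime-to-$\ell$ order, together with the virtual pro-$p$-ness of the compact $p$-adic Lie group $\Sigma$, to see that only the single $\ZZ_p$ coming from tame inertia can contribute to the dimension.
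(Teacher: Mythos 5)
Your argument is correct and is essentially the paper's own proof, written out in full: the paper likewise observes that the wild inertia subgroup is finite because $\Sigma$ is a $p$-adic Lie group and $v \nmid p$, that the tame quotient over the maximal unramified extension is topologically cyclic (so only its $\ZZ_p$-part can survive in a virtually pro-$p$ group), and that the maximal unramified $p$-extension of $F_v$ is $\Fc_x$, which is your dimension-one count for $\Delta_w$. Your more explicit bookkeeping with $\dim\Sigma_w = \dim I_w + \dim\Delta_w$ and the conjugacy-invariance of infiniteness of inertia fills in exactly the steps the paper leaves to the reader.
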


  \begin{proof}
   Since $v \nmid p$ and $\Sigma$ is a $p$-adic Lie group, the wild inertia subgroup of $\Fiw$ is finite. The result now follows immediately from the well-known fact (c.f. \cite{serre72}) that the maximal tamely ramified extension of $F_v$ has topologically cyclic Galois group over the maximal unramified extension, and the maximal unramified $p$-extension of $F_v$ is the cyclotomic extension $\Fc_x$.
  \end{proof} 
   
   \begin{lem}\label{triviallocalterm}
    Let $v \in S'$. Then $J_v(F_\infty)=0$. 
   \end{lem}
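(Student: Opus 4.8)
The plan is to reduce the vanishing to a single completion and then kill the local cohomology by a cohomological-dimension argument. First I would unwind the definition of $J_v(F_\infty)$: since Galois cohomology commutes with the direct limit over the finite subextensions, one gets $J_v(F_\infty)=\bigoplus_{w\mid v}H^1(\Fiw,E)(p)$, the sum running over the primes $w$ of $F_\infty$ above $v$. It is thus enough to show $H^1(\Fiw,E)(p)=0$ for each such $w$.

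Next I would replace $E$ by its $p$-power torsion. Taking the direct limit of the Kummer sequences $0\to E(\Fiw)/p^n\to H^1(\Fiw,E_{p^n})\to H^1(\Fiw,E)[p^n]\to 0$ yields a surjection $H^1(\Fiw,\Ep)\twoheadrightarrow H^1(\Fiw,E)(p)$. Hence it suffices to prove $H^1(\Fiw,\Ep)=0$, and since $\Ep$ is a $p$-primary module this would follow immediately from $\cd(\Gal(\overline{F}_v/\Fiw))=0$ (here and below $\cd$ denotes $p$-cohomological dimension).

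The heart of the matter is this vanishing of the cohomological dimension, which I would deduce from Lemma \ref{locdim1}. Because $F_\infty/F$ is Galois, so is $\Fiw/F_v$, with group $\Sigma_w$, giving a short exact sequence $1\to\Gal(\overline{F}_v/\Fiw)\to\Gal(\overline{F}_v/F_v)\to\Sigma_w\to 1$. For $v\in S'$ Lemma \ref{locdim1} tells us that $\Sigma_w$ has dimension $2$; being a closed subgroup of $\Sigma$ it has no element of order $p$, so by Lazard's theorem it is a $p$-adic Poincar\'e duality group with $\cd(\Sigma_w)=2$. Since $F_v$ is a local field we have $\cd(\Gal(\overline{F}_v/F_v))=2$. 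The standard formula for the cohomological dimension of a closed normal subgroup with Poincar\'e-duality quotient then gives $\cd(\Gal(\overline{F}_v/\Fiw))=2-2=0$, as required.

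The step I expect to be the main obstacle is justifying this last formula: one must verify that $\Sigma_w$ is genuinely a compact $p$-adic Lie group of dimension exactly $2$ (which is precisely the content of Lemma \ref{locdim1}) and is $p$-torsion-free (from admissibility condition (iii)), so that it qualifies as a Poincar\'e duality group of dimension $2$ at $p$ and the additivity $\cd(N)=\cd(G)-\cd(G/N)$ applies. Once this is in place the conclusion is immediate, and in fact one obtains the stronger vanishing $H^i(\Fiw,\Ep)=0$ for all $i\ge 1$.
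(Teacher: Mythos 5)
Your two opening reductions---decomposing $J_v(F_\infty)$ into the local terms $H^1(\Fiw,E)(p)$ and using the Kummer surjection $H^1(\Fiw,\Ep)\twoheadrightarrow H^1(\Fiw,E)(p)$ to reduce everything to $\cd_p(\Gal(\bar{F}_v/\Fiw))=0$---are correct and coincide with the paper's proof. The gap is the last step: the ``standard formula'' $\cd_p(N)=\cd_p(G)-\cd_p(G/N)$ for a closed normal subgroup $N$ with Poincar\'e-duality quotient is not a theorem. The only general relation is the subadditivity $\cd_p(G)\le\cd_p(N)+\cd_p(G/N)$, which here yields the vacuous bound $\cd_p(N)\ge 0$; the known equality criteria require finiteness of the top-degree cohomology of $N$ with $\ZZ/p$-coefficients, which is exactly what cannot be verified a priori. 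Worse, the formula fails under precisely the hypotheses you actually check, which is possible because your argument never genuinely uses $v\nmid p$ beyond quoting the dimension count of Lemma \ref{locdim1}. Take $v\mid p$ and $\Fiw=F_v(\mu_{p^\infty},p^{1/p^\infty})$: then $\Sigma_w$ is a compact $2$-dimensional $p$-adic Lie group with no element of order $p$ (it contains $\ZZ_p\rtimes\ZZ_p$ as an open subgroup), and $\cd_p(\Gal(\bar{F}_v/F_v))=2$, so your formula would give $\cd_p(\Gal(\bar{F}_v/\Fiw))=0$; but this tower has finite residue field extension, so $\Fiw$ still admits an unramified $\ZZ_p$-extension and $\cd_p(\Gal(\bar{F}_v/\Fiw))\ge 1$. (It is exactly this nonvanishing in the $v\mid p$ case---where the paper only gets $\cd_p\le 1$ and works with nonzero groups $H^1(\Fiw,D_w)$ in Lemma \ref{localisom3} and Theorem \ref{theorem11}---that makes the local theory at $p$ nontrivial.) A purely group-theoretic counterexample: a Demushkin pro-$p$ group of rank $4$ is a Poincar\'e duality group of dimension $2$ at $p$ and surjects onto $\ZZ_p^2$, again of dimension $2$, with nontrivial kernel $N$; such an $N$ is free pro-$p$, so $\cd_p(N)=1\neq 0$.

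What is missing, and what the paper's proof actually uses, is structural information about $G_{F_v}$ specific to the residue characteristic $\ell\neq p$, going beyond any count of cohomological dimensions: the wild inertia subgroup is pro-$\ell$, and the tame quotient is topologically generated by Frobenius and a tame generator (c.f.\ \cite{serre72}), so its maximal pro-$p$ analytic part is the $2$-dimensional group $\ZZ_p\rtimes\ZZ_p$. Hence $\dim\Sigma_w\le 2$ always, and when $\dim\Sigma_w=2$ (the case $v\in S'$, by Lemma \ref{locdim1}) the field $\Fiw$ has already absorbed this entire $p$-part, so that $\Gal(\bar{F}_v/\Fiw)$ has profinite order \emph{coprime to $p$}. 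This is the paper's assertion that the profinite degree of $\bar{F}_v$ over $\Fiw$ is prime to $p$---a statement strictly stronger than $\cd_p=0$, and one that cannot be recovered from the dimensions of $\Gal(\bar{F}_v/F_v)$ and $\Sigma_w$ alone. With this input your conclusion, including the stronger vanishing $H^i(\Fiw,\Ep)=0$ for all $i\ge 1$, is immediate; without it, the key step of your argument fails.
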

   \begin{proof}
    By local Kummer theory, we have $H^1(F_{\infty,w},E)(p) \cong H^1(F_{\infty,w},E_{p^\infty})$. 
    But Lemma \ref{locdim1} shows that since $\dim(\Sigma_w)=2$, the profinite degree of the extension
    $\bar{F}_v$ over $F_{\infty,w}$ is coprime to $p$, so $\cd_p(\Gal(\bar{F}_v/F_{\infty,w}))=0$.   
   \end{proof}
  
   \begin{prop}
    For all $v \nmid p$, $J_v(\Fc)$ is a cofinitely generated $\ZZ_p$-module. 
   \end{prop}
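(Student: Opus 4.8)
The plan is to reduce the statement to a single local cohomology group and then exploit the drop in $p$-cohomological dimension that occurs over the unramified $\ZZ_p$-extension. First I would unwind the direct limit defining $J_v(\Fc)$. Since $v \nmid p$ there are only finitely many primes of $\Fc$ above $v$ (the decomposition group $\Gamma_x$ is a closed finite-index subgroup of $\Gamma \cong \ZZ_p$, hence $\Gamma_x \cong \ZZ_p$), and Galois cohomology commutes with direct limits of fields, so
\[ J_v(\Fc) = \bigoplus_{q \mid v} H^1(\Fc_q, E)(p),\]
a finite direct sum of copies of $H^1(\Fc_x, E)(p)$; it therefore suffices to prove that this single group is cofinitely generated over $\ZZ_p$. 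As in the proof of Lemma \ref{triviallocalterm}, local Kummer theory identifies it with $H^1(\Fc_x, \Ep)$: the connecting term $E(\Fc_x) \otimes \QQ_p/\ZZ_p$ vanishes because at each finite layer $E(L'_{x'})$ is an extension of a finite group by a pro-$\ell$ group (with $\ell \neq p$ the residue characteristic), so tensoring with $\QQ_p/\ZZ_p$ kills it, and tensor products commute with direct limits.

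Next I would analyse $H^1(\Fc_x, \Ep)$ through the structure of $G = \Gal(\bar F_v/\Fc_x)$. Because the cyclotomic $\ZZ_p$-extension is unramified outside $p$, the extension $\Fc_x/F_v$ is the unramified $\ZZ_p$-extension of $F_v$, so its maximal unramified extension is still $F_v^{\rm ur}$ and $G$ sits in a short exact sequence $1 \to I \to G \to \Gal(\bar k_v/k_x) \to 1$, with $I$ the inertia subgroup and $\Gal(\bar k_v/k_x)$ isomorphic to the prime-to-$p$ part of $\hat\ZZ$. Since this quotient has $\cd_p = 0$, the Hochschild--Serre spectral sequence collapses and yields $H^1(\Fc_x, \Ep) \cong H^1(I, \Ep)^{\Gal(\bar k_v/k_x)}$. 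As taking invariants preserves cofinite generation over $\ZZ_p$, it is enough to treat the inertia term $H^1(I, \Ep)$.

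For the inertia term I would pass to the tame quotient. The wild inertia $P \trianglelefteq I$ is pro-$\ell$, so $\cd_p(P) = 0$ and Hochschild--Serre collapses again to $H^1(I, \Ep) \cong H^1(I/P, \Ep^{P})$, where $\Ep^P$ is a cofinitely generated $\ZZ_p$-module carrying a continuous action of the tame quotient $I/P \cong \prod_{q \neq \ell}\ZZ_q(1)$. Among the factors of $I/P$ only the pro-$p$ part $\ZZ_p(1)$ can contribute $p$-primary cohomology, so after one further collapse $H^1(I/P, \Ep^{P})$ is the $\sigma$-coinvariants (for $\sigma$ a topological generator of this $\ZZ_p(1)$) of a cofinitely generated $\ZZ_p$-module, and is therefore itself cofinitely generated. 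Assembling the three reductions gives the claim.

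I expect the main obstacle to be the inertia computation, and in particular the temptation to pin down the precise action of inertia on $\Ep$, which varies with the reduction type of $E$ at $v$. The point I would stress is that this is unnecessary: because the residue characteristic $\ell$ differs from $p$, both the Frobenius quotient and the wild inertia are $p$-cohomologically trivial, so the whole contribution is funnelled through the single pro-$p$ tame generator, and the coinvariants of a cofinitely generated $\ZZ_p$-module are automatically cofinitely generated, whatever that action is. I would also take care to note that the argument is uniform in whether or not $v \in S'$, since the extra ramification distinguishing the primes of $S'$ takes place in $F_\infty/\Fc$ rather than in the cyclotomic layer $\Fc/F$, which is unramified at every $v \nmid p$.
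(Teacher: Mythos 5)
Your proof is correct and follows essentially the same route as the paper: after the same Kummer-theoretic reduction to $H^1(\Fc_x,\Ep)$, both arguments exploit that every prime-to-$p$ piece of $\Gal(\bar F_v/\Fc_x)$ (the Frobenius quotient $\prod_{q\neq p}\ZZ_q$ and the wild inertia) has $\cd_p=0$, so inflation--restriction funnels everything into $H^1$ of the pro-$p$ tame quotient $\ZZ_p(1)$ with cofinitely generated discrete coefficients. The only cosmetic difference is that the paper packages your three Hochschild--Serre collapses into a single inflation--restriction step through the maximal tamely ramified $p$-extension $L$ of $\Fc_x$ (whose Galois group $\Pi$ is exactly your pro-$p$ tame generator), whereas you make the final identification as $\sigma$-coinvariants explicit.
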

   \begin{proof}
    For every prime $v$ of $F$, there are only finitely many primes $x$ of $\Fc$ dividing $v$, and we have
    \[ J_v(\Fc) = \bigoplus_{x \mid v} H^1(\Fc_x, E)(p) = \bigoplus_{x \mid v} H^1(\Fc_x, \Ep)\]
    where the second equality again follows from Kummer theory.

    If $L$ is the maximal tamely ramified $p$-extension of $\Fc_x$ and $\Pi = \Gal(L / \Fc_x)$, we have
    \[ 0 \rTo H^1(\Pi, \Ep(L)) \rTo H^1(\Fc_x,\Ep) \rTo H^1(L, \Ep)^{\Pi}\]
    by the Hochschild-Serre spectral sequence. But the last term is zero by the same argument as in lemma \ref{triviallocalterm}, and $H^1(\Pi, \Ep(L))$ is a cofinitely generated $\ZZ_p$-module.
   \end{proof}
   
   \begin{cor}\label{localcotorsion}
    For all $v \nmid p$, $J_v(\Fc)$ is $\Lambda(\Gamma)$-cotorsion. 
   \end{cor}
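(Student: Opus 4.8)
The plan is to deduce this at once from the preceding Proposition together with the elementary structure theory of $\Lambda(\Gamma)$-modules, so the argument should be very short. Since $F_\infty/F$ is admissible, $\Gamma = \Gal(\Fc/F) \cong \ZZ_p$, and hence the Iwasawa algebra $\Lambda(\Gamma)$ is (noncanonically) isomorphic to the power series ring $\ZZ_p[[T]]$, a regular local ring of Krull dimension $2$. First I would translate the desired statement into the dual language: saying that $J_v(\Fc)$ is $\Lambda(\Gamma)$-cotorsion is by definition the assertion that its Pontryagin dual $J_v(\Fc)^\vee$ is a torsion $\Lambda(\Gamma)$-module. By Pontryagin duality, the preceding Proposition, which shows that $J_v(\Fc)$ is cofinitely generated as a $\ZZ_p$-module, is exactly equivalent to the statement that $J_v(\Fc)^\vee$ is a \emph{finitely generated} $\ZZ_p$-module.

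It therefore suffices to establish the general fact that any $\Lambda(\Gamma)$-module $M$ which is finitely generated over $\ZZ_p$ is automatically $\Lambda(\Gamma)$-torsion. The key step is the following observation via the structure theorem for finitely generated $\Lambda(\Gamma)$-modules: if $M$ were not torsion, it would have positive $\Lambda(\Gamma)$-rank, and hence there would be a pseudo-isomorphism exhibiting a free $\Lambda(\Gamma)$-module of positive rank as a subquotient of $M$. But $\Lambda(\Gamma) \cong \ZZ_p[[T]]$ is visibly not finitely generated as a $\ZZ_p$-module, so no nonzero free $\Lambda(\Gamma)$-module can occur as a subquotient of a module that is finitely generated over $\ZZ_p$. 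This contradiction forces $M$ to be $\Lambda(\Gamma)$-torsion, and applying this to $M = J_v(\Fc)^\vee$ completes the proof.

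I do not expect any genuine obstacle here, as the corollary is purely formal given the Proposition. The only point requiring a small amount of care is the reduction in the first paragraph: one must invoke Pontryagin duality correctly so that ``cofinitely generated over $\ZZ_p$'' for the discrete module $J_v(\Fc)$ becomes ``finitely generated over $\ZZ_p$'' for its compact dual, after which the structure-theory argument applies verbatim.
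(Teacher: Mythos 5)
Your argument is correct and is essentially the paper's own: the paper states this corollary without proof, as an immediate consequence of the preceding Proposition via exactly the standard fact you spell out, namely that a $\Lambda(\Gamma)$-module finitely generated over $\ZZ_p$ must be $\Lambda(\Gamma)$-torsion since $\Lambda(\Gamma)\cong\ZZ_p[[T]]$ has infinite $\ZZ_p$-rank. One tiny wording repair: the structure theorem gives a pseudo-isomorphism $M\to\Lambda(\Gamma)^r\oplus(\text{torsion})$ with finite kernel and cokernel, so what you actually obtain is a finite-index submodule of $\Lambda(\Gamma)^r$ as a quotient of $M$ (rather than a free module as a subquotient), but the contradiction with finite generation over $\ZZ_p$ goes through verbatim.
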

   
   \begin{lem}\label{Shapiro3}
   For any $v$ and all $i\geq 0$ there are canonical isomorphisms
   \[
    H^i(H,J_v(F_\infty))\cong \Ind_{\Gamma_x}^\Gamma H^i(H_w,H^1(\Fiw,E)(p)).
   \]
  \end{lem}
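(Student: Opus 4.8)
The plan is to exhibit $J_v(F_\infty)$ as a module induced from the decomposition group at $w$ and then to invoke Shapiro's lemma. Since $F_\infty/F$ is Galois, $\Sigma$ acts transitively on the primes of $F_\infty$ lying above $v$, with the stabiliser of $w$ equal to $\Sigma_w$; restricting to $H=\Gal(F_\infty/\Fc)$, which fixes every prime $x$ of $\Fc$, the primes of $F_\infty$ above a fixed such $x$ form a single $H$-orbit, identified with the profinite coset space $H/H_w$. The transition maps in the direct limit $J_v(F_\infty)=\varinjlim_{L}\bigoplus_{q\mid v}H^1(L_q,E)(p)$ send a local class at $q$ to the tuple of its restrictions to all primes of the next layer above $q$; performing this limit identifies the part of $J_v(F_\infty)$ supported over a fixed $x$ with the induced discrete $H$-module $\Ind_{H_w}^H H^1(\Fiw,E)(p)$, i.e.\ the module of locally constant $H^1(\Fiw,E)(p)$-valued functions on $H/H_w$. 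It is essential here that, because these transition maps are diagonal rather than inclusions of direct summands, the limit is genuinely this induced module and not the naive direct sum over the primes $w'$.

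Granting this, I would finish as follows. There are only finitely many primes $x$ of $\Fc$ above $v$ (as $\Gamma_x$ has finite index in $\Gamma$), and $H$ preserves the resulting decomposition of $J_v(F_\infty)$, so $H$-cohomology commutes with it:
\[ H^i(H,J_v(F_\infty))\cong\bigoplus_{x\mid v}H^i\!\left(H,\Ind_{H_w}^H H^1(\Fiw,E)(p)\right).\]
Shapiro's lemma applied to each summand gives $H^i(H,\Ind_{H_w}^H H^1(\Fiw,E)(p))\cong H^i(H_w,H^1(\Fiw,E)(p))$, an isomorphism of $\Lambda(\Gamma_x)$-modules. Finally the residual action of $\Gamma=\Sigma/H$ on $H^i(H,J_v(F_\infty))$ permutes these summands exactly as $\Gamma$ permutes the primes $x$, namely transitively with stabiliser $\Gamma_x$; hence the direct sum over $x$ is the finite induction $\Ind_{\Gamma_x}^\Gamma$ of the summand attached to the chosen $x$, which is the asserted isomorphism.

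The one genuinely delicate point is the validity of Shapiro's lemma for $H_w$, which need not be open in $H$: indeed, when $v\nmid p$ and $v\notin S'$ one has $\dim H_w=0$ while $\dim H\ge 1$, so there are infinitely many primes above $x$. For continuous cohomology of a profinite group, Shapiro's lemma does hold for an arbitrary closed subgroup provided one uses precisely the induced module of locally constant functions constructed above; thus the whole argument hinges on the identification in the first paragraph, and in particular on the diagonal nature of the transition maps. By contrast the outer induction $\Ind_{\Gamma_x}^\Gamma$ is harmless, since $[\Gamma:\Gamma_x]<\infty$ and $\Lambda(\Gamma)$ is free over $\Lambda(\Gamma_x)$ (compare Lemma \ref{chareltinduced}).
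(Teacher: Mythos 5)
Your proof is correct and takes essentially the same approach as the paper, whose entire argument is the observation that $J_v(F_\infty)=\Ind_{\Sigma_w}^\Sigma H^1(\Fiw,E)(p)$ followed by Shapiro's lemma; you have simply made explicit the steps the paper leaves implicit (the decomposition of the restriction to $H$ over the finitely many primes $x\mid v$, the locally-constant-function model of induction needed because $H_w$ is merely closed in $H$, and the outer finite induction over $\Gamma/\Gamma_x$). Your identification of the diagonal transition maps and the validity of Shapiro's lemma for closed, non-open subgroups as the genuinely delicate points is exactly the content hidden in the paper's phrase ``easy consequence of Shapiro's lemma.''
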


  \begin{proof}
   Easy consequence of Shapiro's lemma, since 
   \[J_v(F_\infty) = \Ind_{\Sigma_w}^\Sigma H^1(\Fiw,E)(p).\]
  \end{proof}
   
  \begin{lem}\label{localvanishing}
   If $v\in S'' = S \setminus S'$ and $v \nmid p$, then
   \[
    H^i(H,J_v(F_\infty))=0
   \]
   for all $i\geq 1$.
  \end{lem}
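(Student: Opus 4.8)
The plan is to reduce, by Shapiro's lemma, to a vanishing statement for the cohomology of the \emph{finite} decomposition group $H_w$, and then to exploit the numerical coincidence that $H_w$ has order prime to $p$ while $H^1(\Fiw,E)(p)$ is $p$-primary.

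First I would apply Lemma \ref{Shapiro3} to rewrite
\[
 H^i(H,J_v(F_\infty)) \cong \Ind_{\Gamma_x}^\Gamma H^i\big(H_w, H^1(\Fiw,E)(p)\big).
\]
Since $\Gamma_x$ has finite index in $\Gamma$, the ring $\Lambda(\Gamma)$ is free, hence flat, over $\Lambda(\Gamma_x)$ (this is the flatness used in Lemma \ref{chareltinduced}), so induction is exact and in particular carries the zero module to the zero module. It therefore suffices to prove that the local term $H^i(H_w, H^1(\Fiw,E)(p))$ vanishes for every $i \ge 1$.

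Next I would determine the structure of $H_w$ by a dimension count. Because $v \nmid p$ and $v \notin S'$, Lemma \ref{locdim1} gives $\dim \Sigma_w = 1$. On the other hand $\Gamma_x \cong \Sigma_w / H_w$ is a finite-index subgroup of $\Gamma \cong \ZZ_p$, and so is itself one-dimensional. Comparing dimensions in $1 = \dim H_w + \dim \Gamma_x$ forces $\dim H_w = 0$, so $H_w$ is finite; and since $\Sigma$, and hence its subgroup $H_w$, contains no element of order $p$ by the admissibility hypothesis, the order of $H_w$ is prime to $p$.

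Finally, the module $H^1(\Fiw,E)(p)$ is $p$-primary by construction. For a finite group of order prime to $p$ acting on a $p$-primary abelian group, multiplication by the group order simultaneously annihilates all higher cohomology and acts invertibly, so $H^i(H_w,-)$ vanishes for all $i \ge 1$; combined with the exactness of induction this yields the claim. I do not expect any serious obstacle here: the one point demanding care is the dimension bookkeeping that identifies $H_w$ as finite of order prime to $p$, after which the vanishing is formal.
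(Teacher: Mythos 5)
Your proposal is correct and follows essentially the same route as the paper: Shapiro's lemma (Lemma \ref{Shapiro3}) reduces the claim to the cohomology of $H_w$, which is finite of order prime to $p$ by admissibility, so $H^i(H_w,-)$ kills the $p$-primary module $H^1(\Fiw,E)(p)$ for $i \ge 1$. The paper simply asserts the finiteness of $H_w$, whereas you justify it via the dimension count from Lemma \ref{locdim1}; this is a welcome elaboration, not a different argument.
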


  \begin{proof}
   This follows from lemma \ref{Shapiro3} since for all such $v$ the group $H_w$ is finite, and as $F_\infty/F$ is admissible, its order must be prime to $p$.
  \end{proof}

  Now let $v$ be a prime of $F$ dividing $p$, and denote by $k_{\infty,w}$ 
  the residue field of $\Fiw$. If $E$ has good ordinary reduction at $v$, then denote by $\tilde{E}_v$ the
  reduction of $E$ modulo $v$. We define
  \[
   D_w=
   \begin{cases}
    \tEp(k_{\infty,w})&   \text{if $E$ has good ordinary reduction at $v$}\\
    \QQ_p/\ZZ_p&  \text{if $E$ has split multiplicative reduction at $v$}
   \end{cases}
  \]
  So $D_w$ is a $\Sigma_w$-module (with trivial action in the split multiplicative case), of finite $\ZZ_p$-corank.
  
  \begin{lem}\label{localisom3}
   Let $v$ be a prime of $F$ dividing $p$ and assume that (R$_v$) holds. Then for all $i\geq 1$, there 
   are canonical isomorphisms of $\Gamma_x$-modules
   \begin{equation}
    H^i(H_w,H^1(\Fiw,E)(p))\cong H^{i+2}(H_w,D_w).
   \end{equation} 
  \end{lem}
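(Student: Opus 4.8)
The plan is to reduce, in both reduction cases, from $E$ to the unramified ``reduction quotient'' of $\Ep$, and then to read off the shift by two in cohomological degree from the differential $d_2$ of a Hochschild--Serre spectral sequence. Write $G_w=\Gal(\overline F_v/\Fiw)$ and let $A_w$ denote the unramified quotient, namely $A_w=\tEp$ when $E$ has good ordinary reduction and $A_w=\QQ_p/\ZZ_p$ (the quotient in the Tate-curve sequence $0\to\mu_{p^\infty}\to\Ep\to\QQ_p/\ZZ_p\to0$) when $E$ has split multiplicative reduction; in either case $D_w=H^0(\Fiw,A_w)$. The first step is to establish a $\Sigma_w$-equivariant isomorphism $H^1(\Fiw,E)(p)\cong H^1(\Fiw,A_w)$. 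Since $\Fiw\supseteq\Fc_x$ contains the cyclotomic $\ZZ_p$-extension of $F_v$, it is deeply ramified in the sense of Coates and Greenberg, so the cohomology of the formal group (the kernel of reduction) vanishes in positive degrees over $\Fiw$; feeding this into the reduction exact sequence, together with Kummer theory as in Lemma \ref{triviallocalterm}, identifies $H^1(\Fiw,E)(p)$ with the cohomology $H^1(\Fiw,A_w)$ of the reduction quotient.

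With this reduction in hand, consider the Hochschild--Serre spectral sequence for the normal subgroup $G_w$ of $\Gal(\overline F_v/\Fc_x)$, with quotient $H_w$,
\[
 E_2^{i,j}=H^i\!\big(H_w,H^j(\Fiw,A_w)\big)\ \Longrightarrow\ H^{i+j}(\Fc_x,A_w),
\]
which is equivariant for $\Gamma_x=\Gal(\Fc_x/F_v)$. The differential $d_2\colon E_2^{i,1}\to E_2^{i+2,0}$ runs exactly from $H^i(H_w,H^1(\Fiw,A_w))$ to $H^{i+2}(H_w,D_w)$, so it suffices to prove that $d_2$ is an isomorphism for $i\ge1$. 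Because $\cd_p(\Gal(\overline F_v/\Fiw))\le 2$, one has $H^j(\Fiw,A_w)=0$ for $j\ge3$, and moreover $H^2(\Fiw,A_w)=0$: passing to finite subextensions $K\subset\Fiw$, local Tate duality identifies $H^2(K,A_w)$ with the dual of $H^0(K,A_w^\ast)$, which vanishes because $A_w^\ast$ is a twist of $\ZZ_p(1)$ and the cyclotomic character has no invariants over a finite extension, and these vanishings persist in the colimit. Hence only the rows $j=0,1$ survive, $E_3=E_\infty$, and the abutment $H^n(\Fc_x,A_w)$ vanishes for $n\ge2$ by the same argument applied to $\Fc_x$. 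For $i\ge1$ this forces both $E_\infty^{i,1}=0$ and $E_\infty^{i+2,0}=0$, so $d_2$ is an isomorphism, giving the lemma; the restriction to $i\ge1$ is exactly what is needed, since at $i=0$ the abutment term $H^1(\Fc_x,A_w)$ is in general nonzero and $d_2$ fails to be an isomorphism.

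The main obstacle is the reduction step of the first paragraph, that is, establishing the Coates--Greenberg vanishing of formal-group cohomology over the \emph{infinite} deeply ramified field $\Fiw$ and deducing the identification $H^1(\Fiw,E)(p)\cong H^1(\Fiw,A_w)$ uniformly in the two reduction types and compatibly with the $\Sigma_w$-action. By contrast, the spectral-sequence degeneration is formal once the vanishing $H^2(\Fiw,A_w)=0$ and $H^n(\Fc_x,A_w)=0$ for $n\ge2$ is known; here one must be careful to compute $H^2$ over the infinite extensions through finite levels and colimits rather than by naive local duality, which is not available over $\Fiw$ itself.
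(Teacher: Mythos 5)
Your proposal is correct and is essentially the paper's own argument: the Coates--Greenberg deeply ramified theory identifies $H^1(\Fiw,E)(p)$ with $H^1(\Fiw,A_w)$ uniformly in the good ordinary and split multiplicative cases, and the Hochschild--Serre spectral sequence for $\Gal(\overline{F_v}/\Fc_x)$ with its two surviving rows then forces $d_2\colon H^i(H_w,H^1(\Fiw,A_w))\to H^{i+2}(H_w,D_w)$ to be an isomorphism for $i\ge 1$, exactly as in the paper (which phrases the same degeneration as a four-term exact sequence). The one divergence is your vanishing step: where you establish $H^2(\Fiw,A_w)=0$ and $H^{n}(\Fc_x,A_w)=0$ for $n\ge 2$ via local Tate duality at finite levels and a colimit, the paper gets both at once from the sharper bound $\cd_p\Gal(\overline{F_v}/\Fiw)\le 1$ (and likewise over $\Fc_x$), which holds because $p^\infty$ divides the profinite degree of these local extensions --- a shortcut that makes your duality computation unnecessary, though your version is equally valid.
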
 

  \begin{proof}
   We follow the strategy of \cite[Lemma 5.16]{coateshowson01}. The extension $\Fiw$ of $F_v$ is deeply ramified in the sense of \cite{coatesgreenberg96} since it contains the deeply ramified field $\Fc_x$, so by propositions 4.3 and 4.8 of {\it op.cit.} there is a canonical isomorphism of $\Sigma_w$-modules 
   \[ H^1(\Fiw,E)(p)\cong H^1(F_{\infty,w},D_w).\]
   Since $p^\infty$ divides the profinite degree of $F_\infty/F$, we have $\cd_p \Gal(\overline{F_v} / \Fiw) \le 1$, and hence $H^i(F_{\infty,w},D_w)=0$ for all $i\geq 2$. 

   By the Hochschild-Serre spectral sequence, for each $i \ge 1$ we have exact sequences
   \[ H^{i + 1}(\Fc_x, D_w) \to H^i(H_w, H^1(\Fiw, D_w)) \to H^{i+2}(H_w, D_w) \to H^{i+2}(\Fc_x, D_w).\]
   The two end terms both vanish, since $\cd_p \Gal(\overline{F_v} / \Fc_x)$ is also at most 1, and the result follows.
  \end{proof}
  
  The following proposition can be seen as the analogue of Condition 2 in \cite{zerbes04} and condition (Fin$_{\rm loc}$) in \cite{zerbes09}.

  \begin{prop}\label{localakashi3} Suppose $F_\infty / F$ is strongly admissible. Then for all $v \mid p$, we have $f^{\Sigma_w}_{D_w}\thicksim 1$.
  \end{prop}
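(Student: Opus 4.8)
The goal is to show $f^{\Sigma_w}_{D_w} \sim 1$ where $D_w$ is a $\Sigma_w$-module of finite $\ZZ_p$-corank. The key available tool is Proposition~\ref{akashi=1}, which says that any $\Sigma$-module cofinitely generated over $\ZZ_p$ has trivial Akashi series, provided the Galois group contains a subextension whose group splits as a direct product $\Gamma \times G$ with $G$ of positive dimension.

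So here's the plan. Note that $D_w$ is, by construction, cofinitely generated (in fact finite corank) as a $\ZZ_p$-module. So the only thing I need to verify is that the local Galois group $\Sigma_w = \Gal(\Fiw/F_v)$ satisfies the hypothesis of Proposition~\ref{akashi=1}: that there is a subextension of $\Fiw$, containing the cyclotomic part $\Fc_x$, whose Galois group over $F_v$ decomposes as a direct product $\Gamma_x' \times G$ with $\Gamma_x'$ a copy of $\ZZ_p$ (the cyclotomic $\Gamma$) and $G$ of dimension $\geq 1$.

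Here is where strong admissibility enters. By definition, strong admissibility guarantees that $\Fiw$ contains the unramified $\ZZ_p$-extension $F_v^{\rm ur}$ of $F_v$. The cyclotomic $\ZZ_p$-extension $\Fc_x$ is also contained in $\Fiw$ and is linearly disjoint from the unramified $\ZZ_p$-extension (the cyclotomic extension is totally ramified at $p$ once we pass to its $\ZZ_p$-part, while $F_v^{\rm ur}$ is unramified; more precisely the two $\ZZ_p$-extensions have trivial intersection because one is ramified and the other unramified at $v$). Hence the compositum $\Fc_x \cdot F_v^{\rm ur}$ has Galois group over $F_v$ equal to $\ZZ_p \times \ZZ_p$, i.e.\ a direct product $\Gamma \times G$ with $G \cong \ZZ_p$ of dimension $1$. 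Taking $K_\infty = \Fc_x \cdot F_v^{\rm ur}$ as the subextension of $\Fiw$, the hypotheses of Proposition~\ref{akashi=1} (in its local incarnation, with $\Sigma_w$ in place of $\Sigma$) are satisfied.

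The main obstacle — really the only substantive point — is verifying the direct product structure, i.e.\ that the cyclotomic and unramified $\ZZ_p$-extensions are independent and that their Galois groups commute. Commutativity is automatic since both are $\ZZ_p$-extensions of the same base and the compositum's group is abelian (it is a quotient of the abelianized absolute Galois group of $F_v$, and the two factors are distinct $\ZZ_p$-quotients). Independence follows because an unramified extension cannot contain any part of a ramified one: the inertia subgroup of $\Gal(\Fc_x \cdot F_v^{\rm ur} / F_v)$ maps isomorphically onto $\Gal(\Fc_x/F_v)$ and trivially onto $\Gal(F_v^{\rm ur}/F_v)$, forcing the group to split. Once this is in hand, Proposition~\ref{akashi=1} applies verbatim (with $\Sigma$ replaced by $\Sigma_w$, $\Gamma$ by $\Gamma_x$, and $G$ the unramified $\ZZ_p$) and yields $f^{\Sigma_w}_{D_w} \sim 1$ immediately.
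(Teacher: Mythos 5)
Your proposal is correct and follows essentially the same route as the paper: strong admissibility puts the unramified $\ZZ_p$-extension of $F_v$ inside $\Fiw$, it is (essentially) disjoint from the cyclotomic $\ZZ_p$-extension, and the resulting product structure lets one invoke Proposition~\ref{akashi=1} in its local incarnation, exactly as the paper does. Your extra justification of the splitting via inertia is slightly overstated in edge cases (the cyclotomic $\ZZ_p$-extension of a general $F_v$ need not be totally ramified, so inertia need only map onto an open subgroup of $\Gal(\Fc_x/F_v)$), but the splitting holds anyway since any surjection $\ZZ_p^2 \to \ZZ_p$ of abelian pro-$p$ groups splits, so this does not affect the argument.
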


  \begin{proof}
   By hypothesis, for all such $v$ the field $\Fiw$ contains both the cyclotomic $\ZZ_p$-extension and the unramified $\ZZ_p$-extension of $F_v$. Now these two extensions are disjoint (up to a finite extension of $F_v$ of order prime to $p$), so the result is a special case of Proposition \ref{akashi=1}.
  \end{proof}

 \subsection{Analysis of the local restriction maps}

  Let $v \in S''$. Applying lemma \ref{Shapiro3} with $i = 0$, we have
  \[ J_v(F_\infty)^H = \Ind_{\Gamma_x}^\Gamma H^1(\Fiw,E)(p)^{H_w}\]
  for any choice of primes $x$ and $w$ above $v$. There is a natural restriction map of $\Gamma_x$-modules
  \[ \gamma_x:H^1(\Fc_x,E)(p) \rightarrow H^1(\Fiw,E)(p)^{H_w},\]
  and hence a map of $\Gamma$-modules $\gamma_v: J_v(\Fc) \to J_v(F_\infty)^H$.
  
  \begin{lem}\label{localkercokerzero3}
   Let $x$ be a prime of $\Fc$ which divides a prime in $S''$ but does not divide $p$. Then both
   $\ker(\gamma_x)$ and $\coker(\gamma_x)$ are zero.
  \end{lem}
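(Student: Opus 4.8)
The plan is to realise $\gamma_x$ as the restriction map inside the inflation–restriction sequence for the extension $\Fiw/\Fc_x$, and then to kill the relevant $H_w$-cohomology by showing that $H_w$ is finite of order prime to $p$. First I would apply local Kummer theory, exactly as in Lemma \ref{triviallocalterm}, to identify $H^1(\Fc_x,E)(p)\cong H^1(\Fc_x,\Ep)$ and $H^1(\Fiw,E)(p)\cong H^1(\Fiw,\Ep)$, so that $\gamma_x$ becomes the natural restriction map $H^1(\Fc_x,\Ep)\to H^1(\Fiw,\Ep)^{H_w}$.

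The Hochschild–Serre spectral sequence for $H_w=\Gal(\Fiw/\Fc_x)$ acting on $\Ep$ then yields the five-term exact sequence
\[
 0\rTo H^1(H_w,\Ep(\Fiw))\rTo H^1(\Fc_x,\Ep)\rTo^{\gamma_x} H^1(\Fiw,\Ep)^{H_w}\rTo H^2(H_w,\Ep(\Fiw)),
\]
in which the middle arrow is precisely $\gamma_x$. Consequently $\ker(\gamma_x)=H^1(H_w,\Ep(\Fiw))$, and $\coker(\gamma_x)$ embeds into $H^2(H_w,\Ep(\Fiw))$; it therefore suffices to prove that these two $H_w$-cohomology groups vanish.

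Next I would establish that $H_w$ is finite of order prime to $p$. Since $x$ divides a prime $v\in S''$ with $v\nmid p$, the local cyclotomic extension $\Fc_x/F_v$ is the unramified $\ZZ_p$-extension (as noted in the proof of Lemma \ref{locdim1}), so $\Gamma_x\cong\ZZ_p$ has dimension $1$; on the other hand Lemma \ref{locdim1} gives $\dim\Sigma_w=1$. As $\Sigma_w/H_w\cong\Gamma_x$, this forces $\dim H_w=0$, so $H_w$ is finite, and since $F_\infty/F$ is admissible, $\Sigma$ has no element of order $p$, whence $|H_w|$ is prime to $p$.

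Finally, because $\Ep(\Fiw)$ is a $p$-primary abelian group while $H^i(H_w,\Ep(\Fiw))$ is annihilated by $|H_w|$, which is prime to $p$, these cohomology groups are simultaneously $p$-primary and killed by a prime-to-$p$ integer, hence vanish for all $i\geq 1$. In particular $H^1(H_w,\Ep(\Fiw))=H^2(H_w,\Ep(\Fiw))=0$, so both $\ker(\gamma_x)$ and $\coker(\gamma_x)$ are zero. The only substantive input beyond formal homological algebra is the finiteness and prime-to-$p$ order of $H_w$, which is exactly the mechanism already used in Lemma \ref{localvanishing}, so I anticipate no serious obstacle here.
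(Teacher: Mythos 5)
Your proposal is correct and follows what is essentially the paper's intended route: the paper gives no argument of its own here, simply citing \cite[Lemma 3.4]{zerbes09}, and your self-contained proof --- Kummer theory to replace $E$ by $\Ep$ (valid since $v \nmid p$, so $E(L)\otimes \QQ_p/\ZZ_p = 0$), the five-term inflation--restriction sequence for $\Fiw/\Fc_x$, and the observation that $\dim \Sigma_w = 1$ for $v \in S''$ forces $H_w$ to be finite of order prime to $p$ --- is exactly the standard mechanism, the same one the paper itself deploys in Lemma \ref{localvanishing}. All the individual steps check out, including the dimension count via Lemma \ref{locdim1} and the vanishing of $H^i(H_w,\cdot)$ on $p$-primary modules for $i \ge 1$.
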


  \begin{proof}
   See \cite[Lemma 3.4]{zerbes09}.
  \end{proof}
  
  \begin{lem}\label{localkercoker3}
   Let $v$ be a prime of $F$ dividing $p$, and assume that (R$_v$) holds. Let $x$ be a prime of $\Fc$ 
   above $v$. Then 
   \begin{align*}
    \ker(\gamma_x)  &=H^1(H_w,D_w),\\
    \coker(\gamma_x)&=H^2(H_w,D_w).
   \end{align*}
  \end{lem}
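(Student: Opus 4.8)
The plan is to reduce $\gamma_x$ to the inflation--restriction sequence for the group extension
\[ 1 \to \Gal(\overline{F_v}/\Fiw) \to \Gal(\overline{F_v}/\Fc_x) \to H_w \to 1,\]
exactly as in the proof of Lemma \ref{localisom3}, but reading off the low-degree terms rather than the high-degree isomorphisms. First I would invoke the Coates--Greenberg theory used there: since both $\Fc_x$ and $\Fiw$ are deeply ramified, propositions 4.3 and 4.8 of \cite{coatesgreenberg96} furnish canonical isomorphisms $H^1(\Fc_x,E)(p)\cong H^1(\Fc_x,D_w)$ and $H^1(\Fiw,E)(p)\cong H^1(\Fiw,D_w)$, and because these isomorphisms are functorial in the base field they intertwine $\gamma_x$ with the natural restriction map
\[ \mathrm{res}\colon H^1(\Fc_x,D_w) \to H^1(\Fiw,D_w)^{H_w}.\]
Thus it suffices to compute the kernel and cokernel of this restriction map.

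Next I would write down the five-term exact sequence of the Hochschild--Serre spectral sequence $E_2^{p,q}=H^p(H_w,H^q(\Fiw,D_w))\Rightarrow H^{p+q}(\Fc_x,D_w)$, namely
\[ 0 \to H^1(H_w,D_w) \to H^1(\Fc_x,D_w) \xrightarrow{\mathrm{res}} H^1(\Fiw,D_w)^{H_w} \xrightarrow{d_2} H^2(H_w,D_w) \to H^2(\Fc_x,D_w),\]
using the identification $H^0(\Fiw,D_w)=D_w$ (the action of $\Gal(\overline{F_v}/\Fiw)$ on $D_w$ being trivial) to rewrite the two edge terms $E_2^{1,0}$ and $E_2^{2,0}$. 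The kernel statement is then immediate: $\ker(\gamma_x)=\ker(\mathrm{res})=H^1(H_w,D_w)$.

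For the cokernel I would observe that $\image(\mathrm{res})=\ker(d_2)$, so $\coker(\gamma_x)\cong\image(d_2)\subseteq H^2(H_w,D_w)$, and that $d_2$ is in fact surjective because its cokernel injects into $H^2(\Fc_x,D_w)$, which vanishes as $\cd_p\Gal(\overline{F_v}/\Fc_x)\le 1$ (again as in Lemma \ref{localisom3}). This gives $\coker(\gamma_x)=H^2(H_w,D_w)$. Note that only the two-row shape of the spectral sequence, guaranteed by $H^q(\Fiw,D_w)=0$ for $q\ge 2$, is needed; no further information about $H_w$ enters.

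I expect the only genuine subtlety to be the first step---verifying that the Coates--Greenberg comparison isomorphisms at the two levels $\Fc_x$ and $\Fiw$ are compatible with restriction, so that $\gamma_x$ genuinely becomes the spectral-sequence edge map. Once this functoriality is in hand, the remainder is the formal extraction of the inflation--restriction sequence together with the two vanishing statements $H^q(\Fiw,D_w)=0$ $(q\ge 2)$ and $H^2(\Fc_x,D_w)=0$ already established in the proof of Lemma \ref{localisom3}.
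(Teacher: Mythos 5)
Your proposal is correct and takes essentially the same route as the paper's proof: the deeply ramified comparison isomorphisms of Coates--Greenberg \cite{coatesgreenberg96} identify $\gamma_x$ with the restriction map on $D_w$-cohomology, and the kernel and cokernel are then read off from the five-term inflation--restriction sequence together with the vanishing $H^2(\Fc_x,D_w)=0$ established in the proof of Lemma \ref{localisom3}. The only difference is one of detail: the paper leaves the functoriality of the comparison isomorphisms and the surjectivity of the transgression implicit, whereas you spell them out.
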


  \begin{proof}
   Both $\Fiw$ and $\Fc_x$ are deeply ramified in the sense of \cite{coatesgreenberg96}, so we have 
   isomorphisms
   \begin{align*}
    H^1(\Fc_x,E)(p)&\cong H^1(\Fc_x,D_w),\\
    H^1(\Fiw,E)(p)&\cong H^1(\Fiw,D_w).
   \end{align*}
   The lemma now follows from the inflation-restriction exact sequence, since (as observed in the proof of lemma \ref{localisom3}) we have $H^2(\Fc_x, D_w) = 0$.
  \end{proof}

 \section{Global results}
 
  In this section, again $E$ is a elliptic curve defined over a number field $F$, $p$ a prime $\geq 5$ and
  $F_\infty/F$ an admissible $p$-adic Lie extension, with Galois group $\Sigma=\Gal(F_\infty/F)$. Let $H=\Gal(F_\infty/\Fc)$ and $\Gamma=\Gal(\Fc/F)$ as before. Define the following hypotheses:
  \begin{itemize}
   \item (R) $E$ has either good ordinary or split multiplicative reduction at the primes of $F$ dividing $p$,
   \item (Tors$_{\rm cyc}$) $\mathcal{C} (E/\Fc)$ is $\Lambda(\Gamma)$-torsion,
   \item (Tors$_\infty$) $\mathcal{C} (E/F_\infty)\in\MHS$.
  \end{itemize}
  We note that (Tors$_\infty$) implies (Tors$_{\rm cyc}$) \cite[lemma 2.6]{zerbes09}.

  \subsection{Global cohomology}
  
   We start with the following observation:
   
   \begin{prop}\label{selmercotorsion}
    $\Sp$ is $\Lambda(\Gamma)$-cotorsion if and only if $\Sc$ is, and if this is the case, then
    \[ f^\Gamma_{\Sp}=f^\Gamma_{\Sc}\times\prod_{v\in S'} f^\Gamma_{J_v(\Fc)}.\]
   \end{prop}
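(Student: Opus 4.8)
The plan is to read the relationship between $\Sc$ and $\Sp$ directly off their defining sequences and then to compute characteristic elements by multiplicativity. Both groups are cut out of the same cohomology group $H^1(G_S(\Fc),\Ep)$: the classical Selmer group $\Sc$ is the kernel of the localisation map at \emph{all} primes of $S$, whereas $\Sp$ is the kernel of the localisation map at the smaller set $S \setminus S'$. Imposing fewer local conditions enlarges the kernel, so $\Sc \subseteq \Sp$, and composing this inclusion with the remaining local conditions at $S'$ yields an exact sequence
\[ 0 \rTo \Sc \rTo \Sp \rTo \bigoplus_{v \in S'} J_v(\Fc) \]
whose exactness at the first two terms is immediate from the definitions.

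I would dispose of the converse implication first, since it requires nothing more. The inclusion $\Sc \subseteq \Sp$ dualises to a surjection $\Sp^\vee \twoheadrightarrow \Sc^\vee$, and a quotient of a torsion $\Lambda(\Gamma)$-module is again torsion; hence if $\Sp$ is $\Lambda(\Gamma)$-cotorsion then so is $\Sc$.

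For the forward implication and the characteristic-element formula I would invoke the short exact sequence \eqref{shortexactlarge2}. Granting this, $\Sp$ is an extension of $\bigoplus_{v \in S'} J_v(\Fc)$ by $\Sc$; the direct summands $J_v(\Fc)$ are $\Lambda(\Gamma)$-cotorsion since every $v \in S'$ satisfies $v \nmid p$, so Corollary \ref{localcotorsion} applies, and the class of $\Lambda(\Gamma)$-cotorsion modules is closed under extensions. The formula then follows from the multiplicativity of the characteristic element of $\Lambda(\Gamma)$-torsion modules in short exact sequences, together with its additivity over direct sums, giving
\[ f^\Gamma_{\Sp} = f^\Gamma_{\Sc} \times \prod_{v \in S'} f^\Gamma_{J_v(\Fc)}. \]

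The one genuinely substantial point is the surjectivity of the map onto $\bigoplus_{v \in S'} J_v(\Fc)$ that upgrades the four-term sequence above to the short exact sequence \eqref{shortexactlarge2}; this is where global duality (Poitou--Tate, or equivalently the vanishing of the relevant global $H^2$) enters, and it is exactly the place where the torsion hypothesis on $\Sc$ is used. Since that surjectivity is supplied by \cite{coatesschneidersujatha03}, the residual obstacle is bookkeeping rather than mathematics: checking that the cited sequence is precisely the one needed, and that the cotorsion hypotheses line up correctly in both directions of the equivalence.
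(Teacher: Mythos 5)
Your proposal is correct and takes essentially the same route as the paper: the four-term sequence $0 \to \Sc \to \Sp \to \bigoplus_{v \in S'} J_v(\Fc)$ read off from the definitions, cotorsionness of the local terms via Corollary \ref{localcotorsion}, the surjectivity from \eqref{shortexactlarge2} (cited from \cite{coatesschneidersujatha03}) under the hypothesis that $\Sc$ is cotorsion, and multiplicativity of characteristic elements. The only cosmetic difference is that you argue the two directions of the equivalence separately (quotient of torsion is torsion, extension of torsion by torsion is torsion), whereas the paper deduces both at once by noting that $\Sp$ and $\Sc$ have the same $\Lambda(\Gamma)$-corank.
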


   \begin{proof}
    By definition, we have an exact sequence
    \[0\rTo\Sc\rTo\Sp\rTo\bigoplus_{v\in S'}J_v(\Fc) \]
    and the last term is a cotorsion module by proposition \ref{localcotorsion}, so $\Sf$ and $\Sc$ have the same $\Lambda(\Gamma)$-corank. Moreover, when $\Sc$ is cotorsion, the map onto $\bigoplus_{v\in S'}J_v(\Fc)$ is surjective (equation \eqref{shortexactlarge2} above) so the relation between the Akashi series follows from proposition \ref{multiplicativity3}.
   \end{proof}
   
   \begin{lem}\label{globalisom3}
    Suppose that (Tors$_{\infty}$) holds. Then for all $i\geq 1$, there are canonical isomorphisms
    \[
     H^i(H,H^1(G_S(F_\infty),\Ep))\cong H^{i+2}(H,\Ep(F_\infty)).
    \]
   \end{lem}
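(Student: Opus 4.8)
The plan is to mimic exactly the local computation carried out in Lemma \ref{localisom3}, but now for the global Galois cohomology group $H^1(G_S(F_\infty), \Ep)$ in place of the local group $H^1(\Fiw, E)(p)$. The engine is again the Hochschild-Serre spectral sequence for the extension $F_\infty / \Fc$ with Galois group $H$, applied to the coefficient module $\Ep$, combined with a cohomological-dimension bound that forces most terms to vanish. First I would recall the key input: since $\Fc \subset F_\infty$ and $p^\infty$ divides the profinite degree of $F_\infty / F$, the group $\Gal(\overline{F} / F_\infty)$ (or rather its maximal quotient unramified outside $S$) has $p$-cohomological dimension at most $1$, so that $H^i(G_S(F_\infty), \Ep) = 0$ for all $i \ge 2$. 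This is the global analogue of the statement $\cd_p \Gal(\overline{F_v}/\Fiw) \le 1$ used locally.

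Next I would invoke the hypothesis (Tors$_\infty$), i.e.\ $\mathcal{C}(E/F_\infty) \in \MHS$. The role of this assumption is to guarantee that the spectral sequence terms are well-behaved and, crucially, that $H^0$ and the low-degree cohomology of $\Ep$ over $G_S(F_\infty)$ do not contribute in the range $i \ge 1$ that we care about; more precisely, under (Tors$_\infty$) one expects $H^0(G_S(F_\infty), \Ep)$ and the relevant pieces to be controlled so that the only surviving contribution after taking $H$-cohomology is the shifted term $H^{i+2}(H, \Ep(F_\infty))$. Here $\Ep(F_\infty) = H^0(G_S(F_\infty), \Ep)$ plays the same role that $D_w = H^0(\Fiw, D_w)$-type terms played locally. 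The shift by $2$ in degree arises, just as in Lemma \ref{localisom3}, from the fact that the only nonvanishing $H^j(G_S(F_\infty), \Ep)$ occur at $j = 0$ and $j = 1$, so that the five-term-type exact sequences coming from Hochschild-Serre collapse and identify $H^i(H, H^1(G_S(F_\infty), \Ep))$ with $H^{i+2}(H, H^0(G_S(F_\infty), \Ep))$ once the boundary contributions are shown to vanish.

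Concretely, for each $i \ge 1$ I would write down the relevant segment of the spectral sequence
\[
 H^{i+1}(\Fc, \Ep) \to H^i(H, H^1(G_S(F_\infty), \Ep)) \to H^{i+2}(H, \Ep(F_\infty)) \to H^{i+2}(\Fc, \Ep),
\]
using the notation $H^j(\Fc, -)$ for $H^j(G_S(\Fc), -)$, and argue that the two outer terms vanish. The vanishing of $H^{i+2}(\Fc, \Ep)$ for $i \ge 1$ (hence for exponent $\ge 3$) and of $H^{i+1}(\Fc, \Ep)$ for $i \ge 1$ (exponent $\ge 2$) should follow from the fact that $G_S(\Fc)$ has $p$-cohomological dimension at most $2$, together with a duality or Poitou-Tate argument showing the exponent-$2$ term also dies under (Tors$_\infty$). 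This global cohomological-dimension bound is the analogue of $H^2(\Fc_x, D_w) = 0$ used in the local case.

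The main obstacle will be justifying that the outer terms $H^{i+1}(G_S(\Fc), \Ep)$ and $H^{i+2}(G_S(\Fc), \Ep)$ genuinely vanish in the required range, since globally $G_S(\Fc)$ has cohomological dimension $2$ rather than $1$, so the borderline term $H^2(G_S(\Fc), \Ep)$ (occurring when $i = 1$) is not killed for free by dimension alone. I expect the resolution to be precisely where (Tors$_\infty$) is used essentially: the torsion hypothesis should force $H^2(G_S(\Fc), \Ep)$ to vanish (or at least to have finite, hence negligible, contribution), via the relationship between this $H^2$ and the cokernel of the global-to-local Selmer defining map, or via Greenberg-style arguments controlling $H^2$ of the big module in terms of the structure of the Selmer group. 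Establishing this vanishing cleanly — rather than merely bounding it — is the delicate point, and I would treat it carefully, possibly citing the relevant control results of \cite{coatessujatha00} or the analogous argument in \cite{zerbes09}.
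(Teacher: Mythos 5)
Your structural plan matches the paper's proof exactly: collapse the Hochschild--Serre spectral sequence for $\Gal(F_\infty/\Fc) = H$ with coefficients $\Ep$ to two rows, and for each $i \ge 1$ extract the four-term exact sequence
\[
 H^{i+1}(G_S(\Fc),\Ep) \to H^i(H, H^1(G_S(F_\infty),\Ep)) \to H^{i+2}(H,\Ep(F_\infty)) \to H^{i+2}(G_S(\Fc),\Ep),
\]
with the outer terms vanishing because they sit in cohomological degree $\ge 2$ over $\Fc$. You also correctly flag that the borderline term $H^2(G_S(\Fc),\Ep)$ is not killed by cohomological dimension alone and must come from the torsion hypothesis; the paper handles it by noting that (Tors$_\infty$) implies (Tors$_{\rm cyc}$) (\cite[lemma 2.6]{zerbes09}), whence $H^i(G_S(\Fc),\Ep) = 0$ for all $i \ge 2$. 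However, there is a genuine gap one level up: your declared \emph{key input}, that $G_S(F_\infty)$ has $p$-cohomological dimension at most $1$ as ``the global analogue'' of $\cd_p \Gal(\overline{F_v}/\Fiw) \le 1$, is false. The paper states in this very proof that $\cd_p(G_S(F_\infty)) = 2$; the local phenomenon whereby $p^\infty$ dividing the profinite degree forces $\cd_p \le 1$ has no global counterpart. Consequently the vanishing $H^2(G_S(F_\infty),\Ep) = 0$ is not free: it is precisely here that (Tors$_\infty$) is used essentially, via \cite[proposition 6]{zerbes04} when $\Ep$ is rational over $F_\infty$, or \cite[theorem 4.5]{hachimorivenjakob03} when $\Ep(F_\infty)$ is finite.

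The gap is consequential, not cosmetic: without $H^2(G_S(F_\infty),\Ep) = 0$ the $E_2$-page has a third potentially nonzero row, the spectral sequence does not degenerate to the two-row form, and your four-term sequences acquire extra contributions of the shape $H^{i-1}(H, H^2(G_S(F_\infty),\Ep))$ which obstruct the shift-by-two isomorphism. Your remark that (Tors$_\infty$) should make ``the spectral sequence terms well-behaved'' gestures at the right hypothesis but attaches it to the wrong place --- what it actually buys, through the cited results, is exactly the vanishing of $H^2$ over $F_\infty$ (and, via (Tors$_{\rm cyc}$), over $\Fc$). The repair is short: replace the $\cd_p \le 1$ claim by these two vanishing statements, after which the remainder of your argument coincides with the paper's. (Note also that the reference you float for the cyclotomic vanishing, Coates--Sujatha, is not among this paper's references; the citations doing the work are \cite{zerbes04} and \cite{hachimorivenjakob03}.)
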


   \begin{proof} 
    We note that (Tors$_\infty$) implies that $H^i(G_S(F_\infty),\Ep) = 0$ for all $i \ge 2$; this is obvious for $i > 2$ as $\cd_p(G_S(F_\infty)) = 2$, and for $i = 2$ it is \cite[proposition 6]{zerbes04} (if $\Ep$ is rational over $F_\infty$) or \cite[theorem 4.5]{hachimorivenjakob03} (if $\Ep(F_\infty)$ is finite). Similarly, since (Tors$_\infty$) implies (Tors$_{\rm cyc}$), we have $H^i(G_S(\Fc),\Ep) = 0$ for all $i \ge 2$. The result now follows by comparing the $\Fc$ and $F_\infty$ cohomology of $\Ep$ using the Hochschild-Serre spectral sequence.
   \end{proof}

   The following proposition can be seen as the analogue of condition (EC$_{\rm glob}$) in \cite{zerbes04} and condition (Fin$_{\rm glob}$) in \cite{zerbes09}.

   \begin{prop}\label{globalakashi3}
    We have $f^\Sigma_{\Ep(F_\infty)}\thicksim 1$.
   \end{prop}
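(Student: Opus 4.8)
The plan is to exhibit $\Ep(F_\infty)$ as a $\ZZ_p$-cofinitely generated $\Sigma$-module and then to reduce the vanishing of its Akashi series to a duality symmetry on homology. The first step is immediate: since $\Ep \cong (\QQ_p/\ZZ_p)^2$, its submodule $\Ep(F_\infty)$ is cofinitely generated over $\ZZ_p$ (of corank $\leq 2$), so its Pontryagin dual $N = \Ep(F_\infty)^\vee$ is a finitely generated $\ZZ_p$-module, which lies in $\MHS$ and has a well-defined Akashi series $\prod_{i\geq 0}\bigl(f^\Gamma_{H_i(H,N)}\bigr)^{(-1)^i}$. When $F_\infty$ happens to contain a second $\ZZ_p$-extension of $F$ linearly disjoint from $\Fc$ this is exactly the hypothesis of Proposition \ref{akashi=1} and we are done at once; the point of the present proposition, however, is that it must hold for \emph{every} admissible extension, including the $\GL_2$-case of Section \ref{gl2ext}, where $\Sigma$ is open in $\GL_2(\ZZ_p)$ and its abelianisation has $\ZZ_p$-rank one, so that no such complement exists.

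The general argument I would give rests on the self-duality of $\Ep$. Each $H_i(H,N)$ is finitely generated over $\ZZ_p$, so modulo $\Lambda(\Gamma)^\times$ its characteristic element is determined by the action of a topological generator of $\Gamma$ on $H_i(H,N)\otimes\QQ_p$, and by Lazard's theorem this rationalised homology is computed by the Lie-algebra homology $H_i(\mathfrak{h}, V)$, where $\mathfrak h = \mathrm{Lie}(H)$ and $V = N\otimes\QQ_p$. Now $H$ is a torsion-free compact $p$-adic Lie group, hence a Poincar\'e duality group of dimension $d=\dim H$, and the Weil pairing makes $V$ self-dual as a $\Sigma$-representation up to a cyclotomic twist. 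Combining these, I expect a perfect $\Gamma$-pairing between $H_i(H,N)$ and $H_{d-i}(H,N)$, which matches the corresponding $\Gamma$-characteristic polynomials in reciprocal pairs and forces the alternating product to collapse. This is cleanest precisely in the $\GL_2$-setting: there $\mathfrak h = \mathfrak{sl}_2$ is semisimple and $V$ is the (nontrivial, irreducible) standard representation, so by Whitehead's lemma $H^{*}(\mathfrak{sl}_2,V)=0$ in all degrees; every $H_i(H, T_pE)$ is then finite, hence pseudo-null over $\Lambda(\Gamma)$, and each factor is already $\thicksim 1$.

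The degenerate coranks and the boundary terms are handled cheaply. If $\Ep(F_\infty)$ is finite then each $H_i(H,N)$ is finite, hence pseudo-null and $\thicksim 1$; the corank-one case reduces, via the finiteness of $H^{\rm ab}$ in the cases of interest, to the trivial coefficient module. For the extreme homological degrees I would invoke Imai's theorem on the finiteness of $\Ep(\Fc)$: it gives $H_0(H,N) = (\Ep(F_\infty)^H)^\vee = \Ep(\Fc)^\vee$ finite, and dually pins down the top group $H_d(H,N)$. The main obstacle I anticipate is the bookkeeping of the two twists occurring in the pairing --- the orientation character of the Poincar\'e duality group $H$ and the cyclotomic twist coming from the Weil pairing. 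An uncancelled twist would shift the eigenvalue $1$ and leave a nontrivial self-reciprocal element rather than a unit; the crux is therefore to check that these twists are absorbed (equivalently, that $H$ is orientable, as holds when $H$ is open in $\SL_2(\ZZ_p)$, and that the cyclotomic twist acts purely through $\Gamma$), so that self-reciprocity together with the vanishing Euler characteristic of the $\ZZ_p$-ranks upgrades to $f^\Sigma_{\Ep(F_\infty)}\thicksim 1$.
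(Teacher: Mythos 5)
You have built your entire strategy on a false premise: you assert that in the $\operatorname{GL}_2$ case no complement as in Proposition \ref{akashi=1} exists, because the abelianisation of $\Sigma$ has $\ZZ_p$-rank one. But Proposition \ref{akashi=1} does not require an abelian quotient of the form $\Gamma\times\Gamma$; it requires a subextension $K_\infty \supseteq \Fc$ with $\Gal(K_\infty/F)\cong\Gamma\times G$, where $G$ may be \emph{any} group of dimension $\ge 1$ --- and a direct product $\Gamma \times G$ with $G$ essentially perfect also has abelianisation of rank one, so your rank count rules nothing out. The paper's proof is exactly the route you dismiss: if $\Ep$ is rational over $F_\infty$, take $K_\infty = F(\Ep)$; the scalar matrices congruent to $1 \bmod p$ form a \emph{central} copy of $\ZZ_p$ on which the cyclotomic character is $\lambda\mapsto\lambda^2$, hence (squaring being bijective on $1+p\ZZ_p$ for $p$ odd) this copy maps isomorphically onto an open subgroup of $\Gamma$ and commutes with the complement $\ker(\chi)$, whose Lie algebra is $\mathfrak{sl}_2$; so up to a finite prime-to-$p$ extension $\Gal(F(\Ep)/F)\cong\Gamma\times G$ with $\dim G=3$, and Proposition \ref{akashi=1} applies. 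If $\Ep$ is not rational over $F_\infty$, then $\Ep(F_\infty)$ is finite and the paper quotes \cite[Proposition 5.4]{zerbes09} for the finiteness of all $H^i(H,\Ep(F_\infty))$; your handling of this case (``via the finiteness of $H^{\rm ab}$ in the cases of interest'') is not an argument.

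The duality mechanism you propose as a substitute does not close, and the ``twist bookkeeping'' you defer to the end is the whole difficulty, not a boundary check. Granting Lazard and Poincar\'e duality, you obtain $f_{H_{d-i}}\thicksim f_{H_i}^{\iota}$ for an involution $\iota$ incorporating inversion and the twists, so (for $d$ odd) the alternating product is $\prod_i \bigl(f_{H_i}/f_{H_i}^{\iota}\bigr)^{\pm 1}$, which is a unit only if each $f_{H_i}$ is $\iota$-fixed --- precisely what is unproven. In fact the twist is \emph{not} absorbed in general: take $H\cong\ZZ_p$ with $\Gamma$ acting by the cyclotomic character (a false-Tate-type situation; $H$ is an orientable Poincar\'e duality group) and the trivial module $\QQ_p/\ZZ_p$. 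Then $H_0(H,\ZZ_p)=\ZZ_p$ and $H_1(H,\ZZ_p)=\ZZ_p(1)$, giving Akashi series $T/(T+1-\chi(\gamma))\nsim 1$; the $\Gamma$-equivariant dualizing twist $\Lambda^d\mathfrak{h}$ is nontrivial even though $H$ is orientable, and this is exactly the source of the correction factors $T+1-\chi(\gamma_v)$ in Theorem \ref{theorem11}, which your symmetry heuristic would erase. Triviality of the conjugation action of $\Gamma$ on $\mathfrak h$ is what the direct product structure buys, and there is no way around supplying it. Your Whitehead-lemma computation is correct as far as it goes: when $F_\infty=F(\Ep)$ exactly, $V$ restricted to $\mathfrak{sl}_2$ is the nontrivial irreducible standard representation, so all $H_i(H,\Ep(F_\infty)^\vee)$ are finite and the Akashi series is trivially a unit. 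But for general admissible $F_\infty\supseteq F(\Ep)$ you must first descend through $R=\Gal(F_\infty/F(\Ep))$ via Hochschild--Serre, and the resulting coefficient modules $H^i(R,\Ep)$ can acquire trivial $\mathfrak{sl}_2$-constituents (already the standard representation tensored with itself contains the trivial one), for which Whitehead gives nothing and one is forced back to the central-$\Gamma$ mechanism of Proposition \ref{akashi=1}.
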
  
   
   \begin{proof}
    If $\Ep$ is not rational over $F_\infty$, then $\Ep(F_\infty)$ is finite, and so $H^i(H,\Ep(F_\infty))$ is finite for all $i\geq 0$ by \cite[Proposition 5.4]{zerbes09} (the assumption in the statement of the proposition that $\operatorname{Lie} H$ be reductive is not used when the torsion is rational). This clearly implies that $f_{\Ep(F_\infty)}\thicksim 1$.

    Hence we may assume $F(\Ep) \subseteq F_\infty$. Then (up to a finite extension of $F$ of order prime to $p$) the Galois group of the extension $F(\Ep)$ of $F$ is of the form treated in Lemma \ref{akashi=1}.
   \end{proof}

  \subsection{The Akashi series of $\Sf^H$}

   The main tool in relating the $\Gamma$-Akashi series of $\Sf^H$ and $\Sp$ is the following commutative diagram appearing in \cite[\S 4.2]{zerbes09}, which we refer to as the ``fundamental diagram'':
   \begin{diagram}\label{exactdiag2}
    0  \to & \Sf^H         & \rTo & H^1(G_S(F_\infty),\Ep)^H & \rTo^{\psi_S(F_\infty)} &\bigoplus_{v\in S}J_v(F_\infty)^H  \\
            & \uTo_{\alpha} &      & \uTo_{\beta}             &                         &  \uTo_{\gamma=\bigoplus_{v\in S''} \gamma_v} \\
    0  \to & \Sp           & \rTo & H^1(G_S(\Fc),\Ep)        & \rTo^{\lambda_S(\Fc)}   & \bigoplus_{v\in S''}J_v(\Fc)      & \rTo & 0
   \end{diagram} 

   \begin{lem}\label{comparison3}
    Both $\coker(\gamma)/\coker(\psi_S(F_\infty))$ and 
    $\image\psi_S(F_\infty)/\image(\gamma)$ are torsion $\Lambda(\Gamma)$-modules and have the same characteristic element.
   \end{lem}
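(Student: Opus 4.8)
The plan is to work with the fundamental diagram \eqref{exactdiag2} and chase the snake lemma, extracting a long exact sequence that relates the kernels and cokernels of the vertical maps $\alpha$, $\beta$, $\gamma$. The key structural input is that the bottom row is exact (including surjectivity onto $\bigoplus_{v\in S''}J_v(\Fc)$), so that $\image(\lambda_S(\Fc)) = \bigoplus_{v\in S''}J_v(\Fc)$, whereas in the top row the map $\psi_S(F_\infty)$ need not be surjective. I would first observe that both quantities in the statement measure a ``defect'' concentrated in the $S''$-part, and that the natural map of short exact sequences built from the two rows gives, via the snake lemma, a six-term (or longer) sequence linking $\ker$ and $\coker$ of $\alpha$, $\beta$, and $\gamma$.

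More precisely, I would set up the diagram
\[
 \begin{diagram}
  0 \rTo & \image(\gamma) & \rTo & \bigoplus_{v\in S''}J_v(F_\infty)^H & \rTo & \coker(\gamma) & \rTo 0\\
         & \dTo           &      & \dTo                                &      & \dTo           & \\
  0 \rTo & \image(\psi_S(F_\infty)) & \rTo & \bigoplus_{v\in S''}J_v(F_\infty)^H & \rTo & \coker(\psi_S(F_\infty)) & \rTo 0
 \end{diagram}
\]
where the middle vertical map is the identity (after discarding the $S'$-part, which vanishes in $J_v(F_\infty)$ by Lemma \ref{triviallocalterm} and is cotorsion over $\Lambda(\Gamma)$ by Corollary \ref{localcotorsion}). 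Applying the snake lemma to this diagram immediately gives an isomorphism
\[
 \image(\psi_S(F_\infty))/\image(\gamma) \cong \coker(\gamma)/\coker(\psi_S(F_\infty)),
\]
since the middle column is the identity, so its kernel and cokernel both vanish. This is the cleanest route: the two modules in the statement are literally isomorphic, hence trivially have the same characteristic element, and their common torsionness reduces to checking that one of them is $\Lambda(\Gamma)$-torsion.

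To establish that these modules are $\Lambda(\Gamma)$-torsion, I would argue that $\coker(\gamma)$ is a subquotient of $\bigoplus_{v\in S''}J_v(F_\infty)^H$, whose $v\nmid p$ components are controlled by Lemmas \ref{localkercokerzero3} and \ref{localvanishing} (giving vanishing or cotorsion) and whose $v\mid p$ components are governed by Lemma \ref{localkercoker3}, which identifies $\coker(\gamma_x)$ with $H^2(H_w, D_w)$. Since $D_w$ has finite $\ZZ_p$-corank, these cohomology groups are finitely generated over $\Lambda(\Gamma_x)$ and (being cohomology in positive degree of a module with no $\Lambda(\Gamma)$-free part surviving) are $\Lambda(\Gamma)$-torsion; after inducing up via Lemma \ref{chareltinduced} the same holds over $\Lambda(\Gamma)$. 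The quantity $\coker(\psi_S(F_\infty))$ is likewise a subquotient of the same induced local terms, so it too is torsion.

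**The main obstacle** I anticipate is not the snake-lemma isomorphism itself, which is formal, but rather verifying carefully that the middle columns of the two rows can be identified --- i.e.\ that after restricting attention to $S''$ the target module $\bigoplus_{v\in S''}J_v(F_\infty)^H$ is genuinely the same in both rows, and that the maps $\gamma$ and $\psi_S(F_\infty)$ land in it compatibly. One must check that the $S'$-contributions and the $p$-adic place contributions are handled consistently with the definitions of $\Sp$ and the fundamental diagram, and in particular that no $\Lambda(\Gamma)$-free rank is lost or gained in passing between the classical and large Selmer groups. Once that bookkeeping is in place, the equality of characteristic elements is immediate from the isomorphism, and no computation of the individual characteristic elements is required.
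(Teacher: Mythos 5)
Your proposal is correct and follows essentially the same route as the paper: the isomorphism $\image(\psi_S(F_\infty))/\image(\gamma)\cong\coker(\gamma)/\coker(\psi_S(F_\infty))$ is the same formal step (the paper simply asserts it; you realise it as a snake lemma with identity middle column, the needed inclusion $\image(\gamma)\subseteq\image(\psi_S(F_\infty))$ coming, as you note, from the surjectivity of $\lambda_S(\Fc)$ in the bottom row), and torsionness is established exactly as in the paper, via Lemmas \ref{localkercokerzero3} and \ref{localkercoker3} identifying $\coker(\gamma)$ with the finite direct sum $\bigoplus_{x \mid p} H^2(H_w,D_w)$ of cofinitely generated $\ZZ_p$-modules, with $\coker(\psi_S(F_\infty))$ a subquotient thereof. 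Your parenthetical about ``no $\Lambda(\Gamma)$-free part surviving'' is an unnecessary detour: finite $\ZZ_p$-corank already forces $\Lambda(\Gamma)$-cotorsion, which is precisely the paper's argument.
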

   \begin{proof}
    We have an isomorphism of $\Gamma$-modules
    \[
     \image(\psi_S(F_\infty))/\image(\gamma)
     \cong \coker(\gamma)/\coker(\psi_S(F_\infty)),
    \]
    so it is sufficient to show that they are torsion $\Lambda(\Gamma)$-modules. Now by Lemmas \ref{localkercokerzero3} and \ref{localkercoker3}, we have $\coker(\gamma)=\bigoplus_{x} H^2(H_w,D_w)$, where the sum runs over all primes of $\Fc$ dividing $p$, and $w$ is any choice of prime of $F_\infty$ above $x$. Since the set of such primes $x$ is finite, $\coker(\gamma)$ is a cofinitely generated $\ZZ_p$-module, so the same is true for $\coker(\psi_S(F_\infty))\subset\coker(\gamma)$.
   \end{proof}  
 
   \begin{prop}\label{chiGamma3}
    If (R) and (Tors$_{\rm cyc}$) hold, then 
    \[
     f^\Gamma_{\Sf^H}=f^\Gamma_{\Sel'(E/\Fc)}\times \delta,
    \]
    where 
    \[
     \delta=
     f_{\coker(\psi_S(F_\infty))}\times\frac{f^\Gamma_{H^2(H,\Ep(F_\infty))}}{f^\Gamma_{H^1(H,\Ep(F_\infty))}}\times
     \prod_{v\mid p}\frac{f^{\Gamma_x}_{H^1(H_w,D_w)}}{f^{\Gamma_x}_{H^2(H_w,D_w)}}.
    \]
   \end{prop}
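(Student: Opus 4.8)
The plan is to read the desired relation off the fundamental diagram \eqref{exactdiag2} by a snake-lemma computation, and then to evaluate the kernels and cokernels of the three vertical maps $\alpha,\beta,\gamma$ using the local and global results already established. The left-hand column of \eqref{exactdiag2} gives the exact sequence
\[ 0 \to \ker\alpha \to \Sp \xrightarrow{\alpha} \Sf^H \to \coker\alpha \to 0, \]
so by the multiplicativity of characteristic elements (Proposition \ref{multiplicativity3}) it will suffice to show that $\ker\alpha$ and $\coker\alpha$ are $\Lambda(\Gamma)$-torsion and that $f^\Gamma_{\Sf^H}/f^\Gamma_{\Sp} = f^\Gamma_{\coker\alpha}/f^\Gamma_{\ker\alpha} = \delta$.

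To apply the snake lemma I first observe that, since the bottom row of \eqref{exactdiag2} is exact (so $\lambda_S(\Fc)$ is surjective), commutativity forces $\image(\gamma) \subseteq \image(\psi_S(F_\infty))$; hence $\gamma$ factors through $\image(\psi_S(F_\infty))$, and replacing the final term of the top row by $\image(\psi_S(F_\infty))$ turns both rows into short exact sequences. The snake lemma then yields the six-term exact sequence
\[ 0 \to \ker\alpha \to \ker\beta \to \ker\gamma \to \coker\alpha \to \coker\beta \to \image(\psi_S(F_\infty))/\image(\gamma) \to 0. \]
The Euler-characteristic identity for this sequence reads $f^\Gamma_{\ker\alpha}\,f^\Gamma_{\ker\gamma}\,f^\Gamma_{\coker\beta} = f^\Gamma_{\ker\beta}\,f^\Gamma_{\coker\alpha}\,f^\Gamma_{\image(\psi_S(F_\infty))/\image(\gamma)}$, so $f^\Gamma_{\coker\alpha}/f^\Gamma_{\ker\alpha}$ equals $f^\Gamma_{\ker\gamma}\,f^\Gamma_{\coker\beta}$ divided by $f^\Gamma_{\ker\beta}\,f^\Gamma_{\image(\psi_S(F_\infty))/\image(\gamma)}$.

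It then remains to evaluate each factor. For $\beta$, the inflation-restriction sequence for $H = \Gal(F_\infty/\Fc)$ identifies $\ker\beta = H^1(H,\Ep(F_\infty))$ and exhibits $\coker\beta$ as the kernel of the transgression $H^2(H,\Ep(F_\infty)) \to H^2(G_S(\Fc),\Ep)$; since (Tors$_{\rm cyc}$) forces $H^2(G_S(\Fc),\Ep)=0$ (as in the proof of Lemma \ref{globalisom3}), the transgression is surjective and $\coker\beta = H^2(H,\Ep(F_\infty))$. For $\gamma = \bigoplus_{v\in S''}\gamma_v$, Lemma \ref{localkercokerzero3} annihilates the contribution of primes not dividing $p$, while Lemma \ref{localkercoker3} (which applies because (R) guarantees (R$_v$) at every $v\mid p$) gives $\ker\gamma_x = H^1(H_w,D_w)$ and $\coker\gamma_x = H^2(H_w,D_w)$; combining Shapiro's lemma (Lemma \ref{Shapiro3}) with Lemma \ref{chareltinduced} yields $f^\Gamma_{\ker\gamma} = \prod_{v\mid p} f^{\Gamma_x}_{H^1(H_w,D_w)}$ and $f^\Gamma_{\coker\gamma} = \prod_{v\mid p} f^{\Gamma_x}_{H^2(H_w,D_w)}$. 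Finally the inclusions $\image(\gamma) \subseteq \image(\psi_S(F_\infty)) \subseteq \bigoplus_{v\in S}J_v(F_\infty)^H$ give the exact sequence $0 \to \image(\psi_S(F_\infty))/\image(\gamma) \to \coker(\gamma) \to \coker(\psi_S(F_\infty)) \to 0$, whence $f^\Gamma_{\image(\psi_S(F_\infty))/\image(\gamma)} = f^\Gamma_{\coker\gamma}/f_{\coker(\psi_S(F_\infty))}$. Substituting these four evaluations into the ratio above and cancelling reproduces $\delta$ exactly.

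The main obstacle is organisational rather than conceptual: one must check that every module entering these computations is a torsion $\Lambda(\Gamma)$-module, so that the characteristic elements are defined and genuinely multiplicative across the three exact sequences. The local terms $H^i(H_w,D_w)$ are torsion by the input to Proposition \ref{localakashi3}, the global terms $H^i(H,\Ep(F_\infty))$ are torsion because $f^\Sigma_{\Ep(F_\infty)}$ is defined (Proposition \ref{globalakashi3}), and both $\coker(\psi_S(F_\infty))$ and $\image(\psi_S(F_\infty))/\image(\gamma)$ are handled by Lemma \ref{comparison3}; torsionness of $\ker\alpha$ and $\coker\alpha$ is then forced by the six-term exact sequence. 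Extra care is needed only in the one non-formal step, the factorisation of $\gamma$ through $\image(\psi_S(F_\infty))$, since it is precisely what legitimises the snake lemma in a diagram whose top row is a priori only left exact.
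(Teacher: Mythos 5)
Your proposal is correct and takes essentially the same route as the paper's own proof: the snake lemma applied to the fundamental diagram \eqref{exactdiag2}, with $\ker(\beta)$ and $\coker(\beta)$ identified via inflation-restriction (using $H^2(G_S(\Fc),\Ep)=0$ under (Tors$_{\rm cyc}$)), the local terms evaluated through Lemmas \ref{localkercokerzero3}, \ref{localkercoker3}, \ref{Shapiro3} and \ref{chareltinduced}, the image quotient handled exactly as in Lemma \ref{comparison3}, and the conclusion drawn from the four-term sequence $0 \to \ker(\alpha) \to \Sp \to \Sf^H \to \coker(\alpha) \to 0$. Your explicit factorisation of $\gamma$ through $\image(\psi_S(F_\infty))$ to legitimise the snake lemma, and your direct derivation of $f^\Gamma_{\image(\psi_S(F_\infty))/\image(\gamma)} = f^\Gamma_{\coker(\gamma)}/f^\Gamma_{\coker(\psi_S(F_\infty))}$, simply make explicit steps the paper leaves tacit.
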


   \begin{proof}
    Applying the snake lemma to the commutative diagram \eqref{exactdiag2} gives an exact sequence of $\Gamma$-modules
    \begin{multline}\label{snake3}
     0 \rTo \ker(\alpha) \rTo \ker(\beta) \rTo \ker(\gamma) \\
       \rTo \coker(\alpha) \rTo \coker(\beta) \rTo \image(\psi_S(F_\infty))/\image(\gamma)\rTo 0.
    \end{multline}
    The inflation-restriction exact sequence shows that
    \begin{align*}
     \ker(\beta)  &=H^1(H,\Ep(F_\infty)),\\
     \coker(\beta)&=H^2(H,\Ep(F_\infty)),
    \end{align*}
    which are cofinitely generated $\ZZ_p$-modules and hence $\Lambda(\Gamma)$-cotorsion. Similarly, it follows from Lemma \ref{localkercoker3} that 
    \begin{align*}
     \ker(\gamma)   &=\bigoplus_{v \mid p} \Ind_{\Gamma_x}^\Gamma H^1(H_w,D_w),\\
     \coker(\Gamma) &=\bigoplus_{v \mid p} \Ind_{\Gamma_x}^\Gamma H^2(H_w,D_w)
    \end{align*}
    and the characteristic elements of these are just $f^{\Gamma_x}_{H^1(H_w,D_w)}$ and $f^{\Gamma_x}_{H^2(H_w,D_w)}$ by lemma \ref{chareltinduced}. It therefore follows from \eqref{snake3} that both $\ker(\alpha)$ and $\coker(\alpha)$ are cotorsion $\Lambda(\Gamma)$-modules, and using the multiplicativity of Akashi series in exact sequences we deduce that
    \[
     \frac{f^\Gamma_{\ker(\alpha)}}{f^\Gamma_{\coker(\alpha)}}=
     \big(f^\Gamma_{\coker(\psi_S(F_\infty))}\big)^{-1}\times\frac{f^\Gamma_{H^1(H,\Ep(F_\infty))}}{f^\Gamma_{H^2(H,\Ep(F_\infty))}}
     \times\prod_{v\mid p}\frac{f^\Gamma_{H^2(H_w,D_w)}}{f^\Gamma_{H^1(H_w,D_w)}}.
    \]
    The result is now immediate from the exact sequence of $\Gamma$-modules
    \[
     0 \rTo \ker(\alpha) \rTo \Sp \rTo \Sf^H \rTo \coker(\alpha) \rTo 0
    \]
    and the assumption that $\Sc$ (and hence $\Sp$, by proposition \ref{selmercotorsion}) is $\Lambda(\Gamma)$-cotorsion.
   \end{proof}

  \subsection{Proof of the main theorem}
   
   As shown in \cite[Proposition 2.5]{zerbes09}, assumption (Tors$_\infty$) implies that the map $\lambda_S(F_\infty)$ is surjective, i.e. we have the short exact sequence
   \begin{equation}\label{Selmer3}
    0 \rTo \Sf \rTo H^1(G_S(F_\infty),\Ep) \rTo \bigoplus_{v\in S}J_v(F_\infty) \rTo 0.
   \end{equation}
   
   \begin{lem}\label{finiteHcohomology3}
    Assume that the conditions (R) and (Tors$_\infty$) hold. Then 
    \[ \prod_{i\geq 1} \left(f^\Gamma_{H^i(H,\Sf)}\right)^{(-1)^i} = 
      \left(f^\Gamma_{\coker(\psi_S(F_\infty))}\right)^{-1} 
     \times \prod_{i\geq 3} \left(\frac{ 
       f^\Gamma_{H^i(H,\Ep(F_\infty))} 
      }
      {
       \prod_{v\mid p} f^{\Gamma_x}_{H^i(H_w,D_w)}
      }\right)^{(-1)^i}.
    \]
   \end{lem}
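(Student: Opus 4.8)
The plan is to apply $H^\bullet(H,-)$ to the short exact sequence \eqref{Selmer3}, truncate the resulting long exact sequence to degrees $i\geq 1$, and read off the stated relation as the ``Euler characteristic'' identity coming from the multiplicativity of characteristic elements along an exact sequence of $\Lambda(\Gamma)$-cotorsion modules. Concretely, I would first record the long exact sequence
\[ \cdots \rTo H^i(H,\Sf) \rTo H^i(H,H^1(G_S(F_\infty),\Ep)) \rTo^{\psi^i} \bigoplus_{v\in S} H^i(H,J_v(F_\infty)) \rTo H^{i+1}(H,\Sf) \rTo \cdots, \]
with $\psi^0=\psi_S(F_\infty)$. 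By exactness the connecting map out of the degree-$0$ local term has image $\coker(\psi_S(F_\infty))$ sitting inside $H^1(H,\Sf)$, so truncating below degree $1$ gives the exact sequence
\[ 0 \rTo \coker(\psi_S(F_\infty)) \rTo H^1(H,\Sf) \rTo H^1(H,H^1(G_S(F_\infty),\Ep)) \rTo \bigoplus_{v} H^1(H,J_v(F_\infty)) \rTo H^2(H,\Sf) \rTo \cdots, \]
which terminates because $H$ has no element of order $p$ and hence finite $p$-cohomological dimension, while all coefficient modules are $p$-primary.

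Next I would identify each term in degrees $i\geq 1$ using the preceding lemmas. The global middle terms are handled by Lemma \ref{globalisom3}, giving $H^i(H,H^1(G_S(F_\infty),\Ep))\cong H^{i+2}(H,\Ep(F_\infty))$. For the local terms I would split $S=S'\sqcup S''$: summands with $v\in S'$ vanish by Lemma \ref{triviallocalterm}; summands with $v\in S''$, $v\nmid p$ vanish in degrees $\geq 1$ by Lemma \ref{localvanishing}; and for $v\mid p$, combining Lemma \ref{Shapiro3} with Lemma \ref{localisom3} yields $H^i(H,J_v(F_\infty))\cong\Ind_{\Gamma_x}^\Gamma H^{i+2}(H_w,D_w)$, whose characteristic element is $\prod_{v\mid p}f^{\Gamma_x}_{H^{i+2}(H_w,D_w)}$ by Lemma \ref{chareltinduced}. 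I would then verify that every module in the truncated sequence is $\Lambda(\Gamma)$-cotorsion: the groups $H^j(H,\Ep(F_\infty))$ and $H^j(H_w,D_w)$ are cofinitely generated over $\ZZ_p$ because $\Ep(F_\infty)$ and $D_w$ are, $\coker(\psi_S(F_\infty))$ is $\ZZ_p$-cofinitely generated by the proof of Lemma \ref{comparison3}, and each $H^i(H,\Sf)$ is then cotorsion as well, being squeezed between cotorsion terms in the sequence.

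Finally I would break the truncated sequence into short exact pieces and apply the additivity of characteristic elements of torsion $\Lambda(\Gamma)$-modules (the $\Gamma$-level form of Proposition \ref{multiplicativity3}), so that the alternating product of the characteristic elements of all its terms is trivial. Solving this relation for $\prod_{i\geq 1}\big(f^\Gamma_{H^i(H,\Sf)}\big)^{(-1)^i}$, substituting the identifications above, and reindexing the surviving global and local contributions by $j=i+2$ (sending the range $i\geq 1$ to $j\geq 3$) produces exactly the asserted formula, with the factor $\big(f^\Gamma_{\coker(\psi_S(F_\infty))}\big)^{-1}$ coming from the first term of the truncated sequence.

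The genuine cohomological content is entirely supplied by the earlier lemmas, so I expect the one delicate point to be purely organizational: correctly tracking the $(-1)^i$ signs attached to the three families of terms according to their positions along the sequence, and checking that the degree shift $j=i+2$ carries these signs into the product over $j\geq 3$ with the orientation shown (global cohomology in the numerator, local cohomology in the denominator). This sign-and-reindexing accounting is where an error could most easily creep in; conceptually the statement is just the Euler-characteristic identity of the truncated long exact sequence.
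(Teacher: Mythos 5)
Your proposal is correct and follows essentially the same route as the paper: the paper's proof likewise takes $H$-cohomology of the short exact sequence \eqref{Selmer3}, truncates to the exact sequence \eqref{skin} beginning with $\coker(\psi_S(F_\infty))$, substitutes the identifications from Lemmas \ref{globalisom3}, \ref{localvanishing} and \ref{localisom3} (via Lemmas \ref{Shapiro3} and \ref{chareltinduced}), and concludes by splitting into short exact sequences and using multiplicativity of characteristic elements, with the cotorsion property of $\coker(\psi_S(F_\infty))$ supplied by (the proof of) Lemma \ref{comparison3}, exactly as you indicate. Your sign and reindexing bookkeeping is also correct, since the three families of terms sit in positions congruent to $i$, $i+1$, $i \pmod 2$ and the shift $j=i+2$ preserves parity.
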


   \begin{proof}
    Taking $H$-cohomology of \eqref{Selmer3}, we get the long exact sequence
    \begin{multline}\label{skin}
     0 \rTo \coker(\psi_S(F_\infty)) \rTo H^1(H,\Sf)\rTo H^1(H,H^1(G_S(F_\infty),\Ep))\\
       \rTo\dots \rTo \bigoplus_{v\in S} H^d(H,J_v(F_\infty)) \rTo 0,
    \end{multline}
    where $d=\dim(H)$, which is equal to the $p$-cohomological dimension of $H$ since $H$ 
    has no element of
    order $p$. Now as shown in Lemmas \ref{localisom3} and \ref{globalisom3}, for all $i\geq 1$ 
    we have canonical isomorphisms
    \[
     H^i(H,H^1(G_S(F_\infty),\Ep))\cong H^{i+2}(H,\Ep(F_\infty))
    \]
    and, for all $v\mid p$,
    \[
     H^i(H,J_v(F_\infty))\cong \Ind_{\Gamma_x}^\Gamma H^{i+2}(H_w,D_w),
    \]
    where $x$ and $w$ are any choice of primes of $\Fc$ and $F_\infty$ above $v$. Finally, for $v \in S''$ not dividing $p$, $H^i(H, J_v(F_\infty))$ is zero by lemma \ref{localvanishing}.

    Note that both $H^i(H,\Ep(F_\infty))$ and $H^i(H_w,D_w)$ are cofinitely generated as $\ZZ_p$-modules, so they are cotorsion $\Lambda(\Gamma)$-modules. Splitting up \eqref{skin} into short exact sequences, and using the multiplicativity of characteristic elements in short exact sequences (Lemma \ref{multiplicativity3}), the result is now a direct consequence of Lemma \ref{comparison3}.
   \end{proof}
    
   \begin{prop}\label{endofproof}
    If (R) and (Tors$_\infty$) hold, then 
    \[f^\Sigma_{\Sf}=T^r \times f^\Gamma_{\Sp},\]
    where $r$ is the number of primes of $F$ dividing $p$ where $E$ has split multiplicative reduction.
   \end{prop}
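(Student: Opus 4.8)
The plan is to compute the $\Sigma$-Akashi series $f^\Sigma_{\Sf}$ directly from its definition as the alternating product $\prod_{i\ge 0}\big(f^\Gamma_{H_i(H,\Sf)}\big)^{(-1)^i}$, and to match it against $f^\Gamma_{\Sp}$ up to the factor $T^r$. The $i=0$ term is governed by the fundamental diagram \eqref{exactdiag2}: from the four-term exact sequence $0\to\ker(\alpha)\to\Sp\to\Sf^H\to\coker(\alpha)\to 0$ and the multiplicativity of Akashi series (Proposition \ref{multiplicativity3}), Proposition \ref{chiGamma3} already expresses $f^\Gamma_{\Sf^H}$ as $f^\Gamma_{\Sp}\times\delta$. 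The higher terms $i\ge 1$ are controlled by Lemma \ref{finiteHcohomology3}. So the strategy is to assemble these two inputs and simplify the resulting product of correction factors, showing that everything cancels except a clean power of $T$.

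First I would write $f^\Sigma_{\Sf} = f^\Gamma_{\Sf^H}\times\prod_{i\ge 1}\big(f^\Gamma_{H_i(H,\Sf)}\big)^{(-1)^i}$ (recalling $H_0(H,\Sf)=\Sf^H$ at the level of the dual) and substitute the expression for $\delta$ from Proposition \ref{chiGamma3} together with the formula from Lemma \ref{finiteHcohomology3}. The $f_{\coker(\psi_S(F_\infty))}$ factors appear with opposite signs in $\delta$ and in Lemma \ref{finiteHcohomology3}, so they cancel immediately. What remains are two global contributions involving the $f^\Gamma_{H^i(H,\Ep(F_\infty))}$ for all $i$, and two local contributions involving the $f^{\Gamma_x}_{H^i(H_w,D_w)}$ for $v\mid p$ and all $i$.

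The key observation is that the global terms now reassemble into the full alternating product defining the Akashi series of $\Ep(F_\infty)$: combining the $i=1,2$ terms from $\delta$ with the $i\ge 3$ terms from Lemma \ref{finiteHcohomology3} yields (up to the correct sign conventions) exactly $f^\Sigma_{\Ep(F_\infty)}$, which is $\thicksim 1$ by Proposition \ref{globalakashi3}. Likewise, for each $v\mid p$ the local terms reassemble into $f^{\Sigma_w}_{D_w}$, which is $\thicksim 1$ by Proposition \ref{localakashi3} under the strong admissibility hypothesis. The one subtlety is that the full alternating products of cohomology differ from the honest Akashi series only in the $i=0$ term $H^0(H,\Ep(F_\infty))=\Ep(F_\infty)$ and $H^0(H_w,D_w)=D_w^{H_w}$. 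In the good ordinary case these $H^0$ terms contribute nothing to the characteristic element (the relevant module is finite, hence $\thicksim 1$), whereas in the split multiplicative case $D_w=\QQ_p/\ZZ_p$ with trivial action, so $D_w^{H_w}=\QQ_p/\ZZ_p$ is not finite and its characteristic element as a $\Gamma_x$-module is $\thicksim T$. This is precisely where the factor $T^r$ enters: each of the $r$ split multiplicative primes above $p$ contributes one factor of $T$ from the discrepancy between the full alternating product and the Akashi series at $i=0$.

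The main obstacle will be the careful bookkeeping of this $i=0$ discrepancy and its sign. I would handle it by treating the good ordinary and split multiplicative primes separately: isolate the $H^0(H_w,D_w)$ term for each $v\mid p$, note that Proposition \ref{localakashi3} controls the \emph{full} product $\prod_{i\ge 0}(\cdots)^{(-1)^i}\thicksim 1$ while Propositions \ref{chiGamma3} and \ref{finiteHcohomology3} only involve the terms with $i\ge 1$, and deduce that the product over $i\ge 1$ equals $f^{\Gamma_x}_{D_w^{H_w}}$ up to a unit. For good ordinary $v$ this is $\thicksim 1$; for split multiplicative $v$ it is $\thicksim T$, since $\QQ_p/\ZZ_p$ with trivial $\Gamma_x$-action has Pontryagin dual $\ZZ_p$ whose characteristic element over $\Lambda(\Gamma_x)$ is $T$ (and passes to $\Lambda(\Gamma)$ as $T$ by Lemma \ref{chareltinduced}). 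Multiplying over the $r$ such primes produces the asserted $T^r$, completing the identification $f^\Sigma_{\Sf}=T^r\times f^\Gamma_{\Sp}$.
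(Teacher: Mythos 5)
Your proposal matches the paper's proof essentially step for step: decompose $f^\Sigma_{\Sf}$ into the $i=0$ term from Proposition \ref{chiGamma3} and the $i\geq 1$ terms from Lemma \ref{finiteHcohomology3}, cancel the $\coker(\psi_S(F_\infty))$ factors, reassemble the rest into $f^\Sigma_{\Ep(F_\infty)}$ and $f^{\Sigma_w}_{D_w}$ (both $\thicksim 1$ by Propositions \ref{globalakashi3} and \ref{localakashi3}), and extract $T^r$ from the leftover $H^0$ terms at the split multiplicative primes. The only quibbles are a typo ($H^0(H,\Ep(F_\infty))=\Ep(\Fc)$, not $\Ep(F_\infty)$) and that the finiteness of this global $H^0$ term, which you assert without justification, is Imai's theorem \cite{imai75} in the paper.
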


   \begin{proof}
    By the definition of the Akashi series, we have
    \[ 
     f^\Sigma_{\Sf} = f^\Gamma_{\Sf^H} \times \prod_{i \ge 1} \left(f^\Gamma_{H^i(H, \Sf)}\right)^{(-1)^i}.
    \]
    By Propositions \ref{chiGamma3} and \ref{finiteHcohomology3}, this is
    \[
     f^\Sigma_{\Sf} = f^\Gamma_{\Sp}\times\prod_{i\geq 1}\big(f^\Gamma_{H^i(H,\Ep(F_\infty))}\big)^{(-1)^i}\times
     \prod_{v\mid p}\prod_{i\geq 1}\big(f^{\Gamma_x}_{H^i(H_w,D_w)}\big)^{(-1)^{i+1}}.
    \]
    Since the Akashi series $f^\Sigma_{\Ep(F_\infty)}$ and $f^{\Sigma_w}_{D_w}$ are both 1, by propositions \ref{globalakashi3} and \ref{localakashi3} respectively, we deduce that
    \[
     f^\Sigma_{\Sf} = f^\Gamma_{\Sp}\times \left(f^\Gamma_{H^0(H, \Ep(F_\infty))}\right)^{-1} \times \prod_{v\mid p} f^{\Gamma_x}_{H^0(H_w,D_w)}.
    \]
    The group $H^0(H,\Ep(F_\infty))=\Ep(\Fc)$ is finite by Imai's theorem (c.f. \cite{imai75}), so $f^\Gamma_{\Ep(\Fc)}\thicksim
    1$. Similarly, if $v$ is a prime of $F$ dividing $p$ where $E$ has good ordinary reduction, then $H^0(H_w, D_w) = \tEp(k^{\rm cyc}_x)$ is finite and $f^{\Gamma_x}_{H^0(H_w, D_w)}\thicksim 1$. 

    Now let $v$ be a prime of $F$ dividing $p$ where $E$ has split multiplicative
    reduction. Then $H^0(H_w,D_w)\cong \QQ_p/\ZZ_p$, so $f^{\Gamma_x}_{H^0(H_w,D_w)}= T$.
   \end{proof}

   Combining this with proposition \ref{selmercotorsion} completes the proof of Theorem \ref{maintheorem}.

\section{The field of $p$-division points}\label{gl2ext}

  In the proof of theorem \ref{maintheorem}, the assumption that $F_\infty$ is strongly admissible is vital, since it guarantees that the Akashi series of the local terms are equal to 1. In this section, we consider a particularly important admissible extension, the extension $F_\infty = F(\Ep)$ generated by the $p$-power torsion points of $E$. If $E$ does not have complex multiplication, the main result of~\cite{serre72} shows that
  $\Sigma=\Gal(F_\infty/F)$ is an open subgroup of $\operatorname{GL}_2(\ZZ_p)$, and if $p \ge 5$, then $\Gal(F_\infty/F)$ has no element of order $p$, so $F_\infty$ is admissible. 

  The decomposition subgroups of $\Sigma$ have been explicitly determined \cite[lemma 5.1]{coateshowson01}. We briefly recall the results:

  \begin{lem}
   Let $E$ be an elliptic curve defined over a number field $F$ without complex multiplication, $p$ a prime $\geq 5$, $F_\infty=F(\Ep)$ and $\Sigma=\Gal(F_\infty/F)$. Let $v$ be a prime of $F$ and $w$ a prime of $F_\infty$ above $v$.
   \begin{enumerate}
    \item If $v \nmid p$, then the inertia subgroup of $\Sigma_w$ is infinite if and only if $E$ has bad but not potentially good reduction at $v$ (or equivalently if $j(E)$ is non-integral at $v$).
    \item If $v \mid p$ and $E$ has good ordinary reduction at $v$, then $F_{\infty, w}$ contains the unramified $\ZZ_p$-extension of $F_v$.
    \item If $v \mid p$ and $E$ has split multiplicative reduction at $v$, then $F_{\infty, w} = F(\mu_{p^\infty}, \sqrt[p^\infty]{q_{v}(E)})$ where $q_v(E)$ is the Tate period of $E$. In particular, $F_{\infty, w}$ has no infinite Galois sub-extensions linearly disjoint from the cyclotomic $\ZZ_p$-extension of $F_v$.
   \end{enumerate}
  \end{lem}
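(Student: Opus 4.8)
The plan is to read off all three statements from the Galois action on $\Ep$ localized at $v$, following \cite[Lemma 5.1]{coateshowson01} and separating the cases according to the reduction type. For (i), with $v \nmid p$, I would invoke the criterion of N\'eron--Ogg--Shafarevich: the inertia subgroup of $\Sigma_w$ is the image of the inertia group at $v$ in $\mathrm{Aut}(\Ep)$, and this is finite exactly when $E$ has potentially good reduction at $v$. The standard dichotomy from the theory of the Tate curve then identifies potentially good reduction with integrality of $j(E)$ at $v$; when $j(E)$ is non-integral, $E$ becomes a Tate curve over a finite extension of $F_v$, and the $p$-power roots of the Tate parameter generate an infinite ramified extension, so the inertia image is infinite. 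This yields the stated equivalence.

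For (ii) I would use the connected--\'etale sequence of the $p$-divisible group of $E$ at $v$ in the good ordinary case,
\[
 0 \to \hat E[p^\infty] \to \Ep \to \tEp \to 0,
\]
as a sequence of $G_{F_v}$-modules, so that $\Fiw \supseteq F_v(\tEp)$. Since $\tilde E_v$ lives over the residue field, the action on $\tEp$ is unramified and Frobenius acts on the rank-one Tate module through the unit root $\alpha_v$ of $X^2 - a_v X + Nv$. The Weil estimate $|\alpha_v|_\infty = \sqrt{Nv} > 1$ shows that $\alpha_v$ is not a root of unity, hence has infinite order in $\ZZ_p^\times$; therefore $F_v(\tEp)$ is an infinite unramified pro-$p$ extension of $F_v$, and so contains the (unique) unramified $\ZZ_p$-extension.

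For (iii), in the split multiplicative case $E$ is a Tate curve with $E(\overline{F_v}) \cong \overline{F_v}^\times / q_v(E)^{\ZZ}$ and $\ord_v(q_v(E)) > 0$; the $p^n$-torsion is generated by $\mu_{p^n}$ together with a $p^n$-th root of $q_v(E)$, giving $F_v(E[p^n]) = F_v(\mu_{p^n}, \sqrt[p^n]{q_v(E)})$, and passing to the limit gives the description of $\Fiw$. For the final assertion I would write $\mathcal G = \Gal(\Fiw/F_v)$, which in the basis adapted to the uniformization is a group of matrices $\left(\begin{smallmatrix} \chi & * \\ 0 & 1 \end{smallmatrix}\right)$, with unipotent part $U \cong \ZZ_p$ (the $p$-power roots of $q_v(E)$) on which $\mathcal G$ acts through $\chi$. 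Given an infinite Galois sub-extension $L$ linearly disjoint from the cyclotomic $\ZZ_p$-extension, set $N = \Gal(\Fiw/L)$; disjointness forces $N$ to have finite-index image in $\mathcal G/U$, so $N$ contains some $n$ with $a = \chi(n) \in 1 + p\ZZ_p$ and $a \ne 1$. Then for every $u \in U$ the commutator $[n,u] = (a-1)u$ lies in the normal subgroup $N$, and since $a - 1 \ne 0$ the subgroup $(a-1)U$ is open in $U$; hence $N \cap U$ is open and $[\mathcal G : N] < \infty$, contradicting the infinitude of $L$.

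The hard part is this final claim in (iii): here the non-abelian structure of $\mathcal G$ is essential, and the crux is the commutator identity $[n,u]=(a-1)u$, which expresses the fact that $\chi$ acts on $U$ with infinite image and thereby forbids any infinite quotient that decouples from the cyclotomic direction. By contrast the first two parts are routine local computations once the reduction type, and hence the relevant filtration or uniformization of $\Ep$, has been made explicit.
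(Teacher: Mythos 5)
Your proof is correct, but note that the paper itself does not prove this lemma at all: it explicitly quotes the results from Coates--Howson \cite[Lemma 5.1]{coateshowson01} without argument. What you have written is therefore a reconstruction of the proof in the cited source rather than an alternative to anything in the paper, and your three arguments are indeed the standard ones that appear there: the N\'eron--Ogg--Shafarevich criterion (valid here since $v \nmid p$) together with the Tate curve over a finite extension for (i); the connected--\'etale sequence of the $p$-divisible group and the fact that the unit root has infinite order in $\ZZ_p^\times$ by the Weil bound for (ii); and Tate's parametrization plus the commutator computation in the group of matrices $\left(\begin{smallmatrix} \chi & * \\ 0 & 1 \end{smallmatrix}\right)$ for (iii).

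Two small points of precision, neither of which damages the argument. In (ii), $F_v(\tEp)/F_v$ need not be pro-$p$: its Galois group is the closure of $\langle \alpha_v \rangle$ in $\ZZ_p^\times$, which can have a nontrivial prime-to-$p$ torsion part. But since this closure is infinite it surjects onto $\ZZ_p$, and any continuous surjection $\hat{\ZZ} \to \ZZ_p$ kills exactly the prime-to-$p$ part of $\hat{\ZZ}$, so the containment of the unique unramified $\ZZ_p$-extension follows all the same. In (iii), your parenthetical identification $U \cong \ZZ_p$ requires knowing that $q_v(E)$ does not become a $p^\infty$-th power up the tower $F_v(\mu_{p^\infty})$; positive valuation alone does not immediately give this, since the value group of $F_v(\mu_{p^\infty})$ is $p$-divisible, so this standard fact deserves a word of justification. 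Happily, your final commutator argument is insensitive to it: $(a-1)U \subseteq N \cap U$ has finite index in $U$ whether $U$ is isomorphic to $\ZZ_p$ (as $a - 1 \neq 0$) or finite, so $[\mathcal{G} : N] < \infty$ and the contradiction goes through in either case.
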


  It follows that if $E$ has split multiplicative reduction at some primes above $p$, $F_\infty / F$ is not strongly admissible, so theorem \ref{maintheorem} does not apply. Nonetheless, when this is not the case, we can still calculate the ``error term'' that arises: 

  \begin{thm}\label{theorem11}
   Let $E$ be an elliptic curve defined over a number field $F$, $p$ a prime $\geq 5$, $F_\infty=F(\Ep)$ and $\Sigma=\Gal(F_\infty/F)$. Assume that 
   \begin{enumerate}
    \item $E$ has either good ordinary or split multiplicative reduction at all primes of $F$ dividing $p$,
    \item $\mathcal{C} (E/F_\infty)\in\MHS$,
    \item $E$ does not have complex multiplication.
   \end{enumerate}
   Then the $\Sigma$-Akashi series of $\Sf$ is given by the following formula:
   \[ f^\Sigma_{\Sf}=f^\Gamma_{\Sc}\times\prod_{v\in\mathfrak{M}}f^\Gamma_{J_v(\Fc)}\times\prod_{v\in\mathfrak{R}} \left(T + 1 - \chi(\gamma_v)\right) \mod \Lambda(\Gamma)^*.\]
   Here, $\mathfrak{M}$ is the set of primes of $F$ not dividing $p$ where $E$ has non-integral $j$-invariant; $\mathfrak{R}$ is the set of primes of $F$ dividing $p$ where $E$ has split multiplicative reduction; and $\gamma_v$ is a generator of the decomposition group of a prime of $\Fc$ above $v$.
  \end{thm}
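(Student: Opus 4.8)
The plan is to re-run the proof of Theorem \ref{maintheorem} essentially verbatim, tracking the single point at which strong admissibility was used and replacing the vanishing statement there by an explicit calculation. Inspecting the argument, the \emph{only} ingredient requiring $F_\infty/F$ to be strongly admissible is Proposition \ref{localakashi3}, i.e.\ the assertion $f^{\Sigma_w}_{D_w}\thicksim 1$ for $v\mid p$; every other step (Propositions \ref{selmercotorsion}, \ref{chiGamma3}, \ref{globalakashi3} and Lemmas \ref{comparison3}, \ref{finiteHcohomology3}) uses only that $F_\infty/F$ is admissible, which holds for $F_\infty=F(\Ep)$ when $p\geq 5$ and $E$ has no complex multiplication. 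Unwinding Proposition \ref{endofproof} \emph{without} invoking $f^{\Sigma_w}_{D_w}\thicksim 1$ gives
\[ f^\Sigma_{\Sf}=f^\Gamma_{\Sp}\times\frac{f^\Sigma_{\Ep(F_\infty)}}{f^\Gamma_{\Ep(\Fc)}}\times\prod_{v\mid p}\frac{f^{\Gamma_x}_{H^0(H_w,D_w)}}{f^{\Sigma_w}_{D_w}}. \]
The global factor is $\thicksim 1$ by Proposition \ref{globalakashi3} and the finiteness of $\Ep(\Fc)$ (Imai's theorem), and by part (ii) of the decomposition-group lemma recalled above each good ordinary $v\mid p$ remains \emph{locally} strongly admissible, so Proposition \ref{localakashi3} still applies there and contributes a factor $\thicksim 1$. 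The entire correction is therefore concentrated at the split multiplicative primes $v\in\mathfrak{R}$.

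Next I would compute $f^{\Sigma_w}_{D_w}$ explicitly for $v\in\mathfrak{R}$, where $D_w=\QQ_p/\ZZ_p$ carries the trivial action, so $M:=D_w^\vee=\ZZ_p$ is the trivial module. By part (iii) of the lemma, $\Fiw=F_v(\mu_{p^\infty},\sqrt[p^\infty]{q_v(E)})$, so $H_w=\Gal(\Fiw/\Fc_x)$ is the Kummer extension adjoining the $p$-power roots of the Tate period, whence $H_w\cong\ZZ_p$. The crucial input is the conjugation action of $\Gamma_x$ on $H_w$: writing $h_c\in H_w$ for the element with $h_c(\sqrt[p^n]{q_v(E)})=\zeta_{p^n}^{\,c}\sqrt[p^n]{q_v(E)}$ and lifting $\gamma_v\in\Gamma_x$ to $\sigma\in\Sigma_w$, a short Kummer-theoretic computation gives $\sigma h_c\sigma^{-1}=h_{\chi(\gamma_v)c}$, so $\Gamma_x$ acts on $H_w$ through the cyclotomic character $\chi$. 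Since $H_w$ is procyclic the only nonvanishing homology is
\[ H_0(H_w,\ZZ_p)=\ZZ_p,\qquad H_1(H_w,\ZZ_p)\cong H_w^{\rm ab}\cong\ZZ_p(1), \]
the first with trivial $\Gamma_x$-action and the second with $\gamma_v$ acting by $\chi(\gamma_v)$. Under $\Lambda(\Gamma_x)\cong\ZZ_p[[T]]$, $\gamma_v\mapsto 1+T$, their characteristic elements are $T$ and $T+1-\chi(\gamma_v)$, so
\[ f^{\Sigma_w}_{D_w}=\frac{f^{\Gamma_x}_{H_0(H_w,\ZZ_p)}}{f^{\Gamma_x}_{H_1(H_w,\ZZ_p)}}=\frac{T}{T+1-\chi(\gamma_v)}. \]
As $H^0(H_w,D_w)=\QQ_p/\ZZ_p$ also has characteristic element $T$, the local correction factor at $v$ equals $T/\bigl(T/(T+1-\chi(\gamma_v))\bigr)=T+1-\chi(\gamma_v)$.

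Finally I would assemble the factors. By Proposition \ref{selmercotorsion}, $f^\Gamma_{\Sp}=f^\Gamma_{\Sc}\times\prod_{v\in S'}f^\Gamma_{J_v(\Fc)}$, and by part (i) of the decomposition-group lemma the set $S'$ of primes $v\nmid p$ with infinite inertia in $\Sigma$ coincides with the set $\mathfrak{M}$ of primes of non-integral $j$-invariant. Substituting the correction factors computed above---trivial away from $\mathfrak{R}$ and equal to $T+1-\chi(\gamma_v)$ on $\mathfrak{R}$---into the displayed product yields
\[ f^\Sigma_{\Sf}=f^\Gamma_{\Sc}\times\prod_{v\in\mathfrak{M}}f^\Gamma_{J_v(\Fc)}\times\prod_{v\in\mathfrak{R}}\bigl(T+1-\chi(\gamma_v)\bigr)\pmod{\Lambda(\Gamma)^\times}, \]
as claimed. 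Where $\Gamma_x$ is a proper finite-index subgroup of $\Gamma$, the factor $T+1-\chi(\gamma_v)$ is to be read as the $\Gamma_x$-characteristic element $\gamma_v-\chi(\gamma_v)$ transported into $Q(\Gamma)^\times$ via Lemma \ref{chareltinduced}.

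The main obstacle is the identification of the conjugation action of $\Gamma_x$ on $H_w$ as the cyclotomic character: it is precisely the resulting Tate twist $\ZZ_p(1)$ in degree one---rather than the trivial module appearing in the strongly admissible case---that converts the naive factor $T$ into $T+1-\chi(\gamma_v)$. A related point requiring care is the bookkeeping of Pontryagin duality together with the convention for characteristic elements: one must verify that the conjugation action and the identification $\Lambda(\Gamma_x)\cong\ZZ_p[[T]]$ produce $\chi(\gamma_v)$ and not $\chi(\gamma_v)^{-1}$, which is confirmed by computing $H_1(H_w,\ZZ_p)$ and the dual group $H^1(H_w,D_w)$ in parallel. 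As a consistency check it is worth noting that, in contrast to the $T^r$ of Theorem \ref{maintheorem}, the constant term $1-\chi(\gamma_v)$ of each new factor is nonzero (it lies in $p\ZZ_p$, since $\chi$ is nontrivial on the procyclic group $\Gamma_x$), so this extension produces no genuine extra zero at $T=0$.
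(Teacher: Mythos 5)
Your proposal is correct and takes essentially the same route as the paper's own proof: the paper likewise observes that strong admissibility enters only through Proposition \ref{localakashi3}, handles the good ordinary primes by applying Proposition \ref{akashi=1} to $F_{\infty,w}/F_v$, and at split multiplicative primes uses $\Sigma_w = \ZZ_p \rtimes \Gamma_x$ with cyclotomic conjugation action to get $H^1(H_w,D_w)=(\QQ_p/\ZZ_p)(\chi)$ and $H^i(H_w,D_w)=0$ for $i\geq 2$, then concludes ``exactly as in Proposition \ref{endofproof}'' --- which is precisely your assembly, with your homological computation $H_0(H_w,\ZZ_p)=\ZZ_p$, $H_1(H_w,\ZZ_p)=\ZZ_p(1)$ being the Pontryagin-dual phrasing of the paper's cohomological one. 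Your explicit Kummer-theoretic derivation of the conjugation action, the duality bookkeeping for $\chi$ versus $\chi^{-1}$, and the appeal to Lemma \ref{chareltinduced} when $\Gamma_x \neq \Gamma$ simply make explicit steps the paper leaves implicit.
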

  
  \begin{proof}
   Let $v$ be a prime of $F$ above $p$, and let $D_w$ be as defined above. If $E$ has good ordinary reduction at $v$, then proposition \ref{akashi=1} applies to $F_{\infty, w} / F_v$, so we have $f^{\Sigma_w}_{D_w} = 1$. On the other hand, if $F$ has split multiplicative reduction at $p$, we have $D_w = \QQ_p / \ZZ_p$ and $\Sigma_w = \ZZ_p \rtimes \Gamma_x$, with $\Gamma_x$ acting on $\ZZ_p$ via the cyclotomic character. Hence:
   \begin{align*}
    H^1(H_w, D_w) &= (\QQ_p / \ZZ_p)(\chi) \\
    H^i(H_w, D_w) &= 0 \quad \text{for all $i \ge 2$}.
   \end{align*}
   The result now follows exactly as in proposition \ref{endofproof} above.
  \end{proof}

  \begin{cor}
   Under the conditions of \ref{theorem11}, if $\Sf$ and $\Sc$ have finite generalised $\Sigma$- and $\Gamma$-Euler characteristics, then these are related by the formula 
   \[
    \chi(\Sigma,\Sf)=\chi(\Gamma,\Sc)\times\left| \prod_{v\in\mathfrak{M}}L_v(E,1)\times\prod_{v\in\mathfrak{R}}
    \#\mu_{p^\infty}(F_v)\right|_p.
   \]
  \end{cor}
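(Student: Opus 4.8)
The plan is to deduce the Euler characteristic formula from the Akashi series identity of Theorem \ref{theorem11} by passing to the leading terms at $T = 0$. Recall that when $\Sf$ and $\Sc$ have finite generalised Euler characteristics, these are governed by the leading coefficients of their Akashi series at $T = 0$: specifically, $\chi(\Sigma, \Sf)$ equals the $p$-adic absolute value of the leading coefficient $\alpha^\Sigma_{\Sf}$ of $f^\Sigma_{\Sf}$, and similarly $\chi(\Gamma, \Sc)$ equals $|\alpha^\Gamma_{\Sc}|_p$. This is the standard relationship between generalised Euler characteristics and characteristic elements, and it is exactly the ``semisimple at $T=0$'' setup recalled in the Remark following Theorem \ref{maintheorem}. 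So the first step is to take the formula of Theorem \ref{theorem11} and read off the leading terms of both sides at $T = 0$.

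Next I would evaluate each correction factor at $T = 0$. For the primes $v \in \mathfrak{M}$, the factor $f^\Gamma_{J_v(\Fc)}$ has nonzero constant term equal to $L_v(E,1) \pmod{\ZZ_p^\times}$ by \cite[lemma 2.14]{zerbes09} (as already noted in the Remark), so its contribution to the leading coefficient is precisely $L_v(E,1)$ up to a $p$-adic unit, giving $|L_v(E,1)|_p$ after taking absolute values. For the primes $v \in \mathfrak{R}$, the factor is $T + 1 - \chi(\gamma_v)$, where $\chi$ is the cyclotomic character and $\gamma_v$ is a topological generator of the decomposition group $\Gamma_x \cong \ZZ_p$. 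Evaluating at $T = 0$ gives the constant $1 - \chi(\gamma_v)$, and the key identification to make here is that $|1 - \chi(\gamma_v)|_p = \#\mu_{p^\infty}(F_v)^{-1}$ up to the normalisation of the absolute value; more precisely, $1 - \chi(\gamma_v)$ measures the number of $p$-power roots of unity in the local field, which is exactly $\#\mu_{p^\infty}(F_v)$. Combining these contributions and taking $|\cdot|_p$ throughout yields the stated formula.

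The main obstacle I anticipate is the careful bookkeeping of the normalisations: one must check that the passage from leading coefficients of Akashi series to generalised Euler characteristics respects the $p$-adic absolute values consistently, and in particular that the factor $1 - \chi(\gamma_v)$ contributes $\#\mu_{p^\infty}(F_v)$ rather than its inverse or some power. This requires unwinding the definition of the generalised Euler characteristic in terms of $H_0$ and $H_1$ of the relevant module over the decomposition group, and matching it against the valuation of $1 - \chi(\gamma_v)$. Concretely, if $\mu_{p^m}(F_v)$ is the group of $p$-power roots of unity in $F_v$, then $\chi(\gamma_v) \equiv 1 \pmod{p^m}$ but $\chi(\gamma_v) \not\equiv 1 \pmod{p^{m+1}}$, so $|1 - \chi(\gamma_v)|_p = p^{-m} = \#\mu_{p^\infty}(F_v)^{-1}$, and one must verify that this inversion is absorbed correctly when the term appears in the Euler characteristic as the size of a cohomology group rather than its reciprocal. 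Once the sign of the valuation is pinned down on one prime, the rest follows by the multiplicativity already established, so the genuine work is confined to this single local normalisation check.
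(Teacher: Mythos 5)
Your proposal matches the paper exactly: the corollary is stated there without a separate proof, being precisely the leading-term-at-$T=0$ evaluation of Theorem \ref{theorem11} that is sketched in the Remark following Theorem \ref{maintheorem} (semisimplicity gives $\chi$ as the $p$-adic size of the leading coefficient, the factors $f^\Gamma_{J_v(\Fc)}$ contribute $L_v(E,1)$ via \cite[lemma 2.14]{zerbes09}, and the factors $T+1-\chi(\gamma_v)$ contribute their constant terms), which is exactly what you carry out. Your normalisation check is also the right one and is the only genuine content: since $\chi$ identifies $\Gamma_x$ with an open subgroup of $1+p\ZZ_p$, one has $v_p\bigl(1-\chi(\gamma_v)\bigr)=v_p\bigl(\#\mu_{p^\infty}(F_v)\bigr)$, so the split multiplicative factor contributes $\left|\#\mu_{p^\infty}(F_v)\right|_p$ inside the absolute value, as in the stated formula.
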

  
  \begin{remark}
   Using the methods of \cite{zerbes09}, one can show that if $\Sha(E/F)(p)$ is finite, then $\Sf$ has finite generalised $\Sigma$-Euler characteristic if and only if and $\Sc$ has finite generalised $\Gamma$-Euler characteristic.
  \end{remark}

\subsection*{Acknowledgements} I would like to thank John Coates and David Loeffler for their interest, and the latter for his love and support (and his careful reading of the manuscript).


\end{document}